\newtheorem{thm}{Theorem}[section]
\newtheorem {asp}{Assumption}[section]
\newtheorem{lm}{Lemma}[section]
\newtheorem{rmk}{Remark}[section]
\newtheorem{deff}{Definition}[section]
\newtheorem{prop}{Proposition}[section]
\theoremstyle{definition}
\theoremstyle{remark}
\newtheorem{example}{Example}[section]
\numberwithin{equation}{section}
\DeclareMathOperator{\suppo}{supp}
\DeclareMathOperator{\Conv}{Conv}
\newcommand{\eps}{\varepsilon}
\newcommand{\I}{\mathcal{I}}
\newcommand{\M}{\mathcal{M}}
\newcommand{\F}{\mathcal{F}}
\newcommand{\E}{\mathbb{E}}
\newcommand{\BE}{\mathbf{E}}
\newcommand{\BB}{\mathbf{B}}
\newcommand{\BX}{\mathbf{X}}
\newcommand{\bx}{\mathbf{x}}
\newcommand{\ba}{\mathbf{a}}
\newcommand{\bc}{\mathbf{c}}
\newcommand{\N}{\mathbb{N}}
\newcommand{\PP}{\mathbb{P}}
\newcommand{\R}{\mathbb{R}}
\newcommand{\U}{\mathcal{U}}
\newcommand{\wtd}{\widetilde}
\numberwithin{equation}{section}
\newcommand{\bed}{\begin{displaymath}}
\newcommand{\eed}{\end{displaymath}}
\newcommand{\bea}{\bed\begin{array}{rl}}
\newcommand{\eea}{\end{array}\eed}
\newcommand{\barray}{\begin{array}{ll}}
\newcommand{\earray}{\end{array}}
\newcommand{\1}{\boldsymbol{1}}
\newcommand{\0}{\boldsymbol{0}}
\newcommand{\bdelta}{\boldsymbol{\delta}}
\def\bar{\overline}
\def\hat{\widehat}
\def\a.s{\text{\;a.s.\;}}
\title[Persistence in food chains]{Persistence in Stochastic Lotka--Volterra food chains with intraspecific competition}
\author[A. Hening]{Alexandru Hening }
\address{Department of Mathematics\\
Tufts University\\
Bromfield-Pearson Hall\\
503 Boston Avenue\\
Medford, MA 02155\\
United States
}
\email{Alexandru.Hening@tufts.edu}
\author[D. Nguyen]{Dang H. Nguyen }
\thanks{D. Nguyen was in part supported by
 the National Science Foundation
under grant DMS-1207667.}
\address{Department of Mathematics \\
 Wayne State University\\
 Detroit, MI 48202 \\
 United States}
 \email{dangnh.maths@gmail.com}
\keywords{Stochastic population growth; density-dependence; ergodicity; spatial and temporal heterogeneity; intraspecific competition, Lotka-Volterra models; Lyapunov exponent; stochastic environment; predator-prey}
\subjclass[2010]{92D25, 37H15, 60H10, 60J60}
\begin{document}
\maketitle

\begin{abstract}
This paper is devoted to the analysis of a simple Lotka-Volterra food chain evolving in a stochastic environment. It can be seen as the companion paper of Hening and Nguyen (J. of Math. Biol. `18) where we have characterized the persistence and extinction of such a food chain under the assumption that there is no intraspecific competition among predators. In the current paper we focus on the case when all the species experience intracompetition.
The food chain we analyze consists of one prey and $n-1$ predators. The $j$th predator eats the $j-1$st species and is eaten by the $j+1$st predator; this way each species only interacts with at most two other species - the ones that are immediately above or below it in the trophic chain.
We show that one can classify, based on the invasion rates of the predators (which we can determine from the interaction coefficients of the system via an algorithm), which species go extinct and which converge to their unique invariant probability measure. We obtain stronger results than in the case with no intraspecific competition because in this setting we can make use of the general results of Hening and Nguyen (Ann. of Appl. Probab.). Unlike most of the results available in the literature, we provide an in depth analysis for both non-degenerate and degenerate noise.

We exhibit our general results by analysing trophic cascades in a plant--herbivore--predator system and providing persistence/extinction criteria for food chains of length $n\leq 3$.
\end{abstract}
\tableofcontents

\section{Introduction}

Biological populations usually do not evolve in isolation and as such it is fundamentally important to determine which species persist and which go extinct in a given ecosystem. The fluctuations of the environment make the dynamics of populations inherently stochastic. Consequently, one needs to account for the combined effects of biotic interactions and environmental fluctuations when trying to determine species richness. Sometimes biotic effects can result in species going extinct. However, if one adds the effects of a random environment, extinction might be reversed into coexistence. In other instances, deterministic systems that coexist become extinct once one takes into account environmental fluctuations. A  successful method for studying this interplay is to model the populations as discrete or continuous time Markov processes and study the long-term behavior of these processes (\cite{C00, ERSS13, EHS15,  LES03, SLS09, SBA11, BEM07, BS09, BHS08, CM10, CCA09}).

Even though ecological systems are often more complex than a linear food chain, understanding food chain dynamics has a very long theoretical and empirical history and is a topic that is covered extensively in introductory biology (\cite{O71,W93, P92, H92, OPO95, VZ99, PCG00}). In nature the way species interact changes at least seasonally and is extremely complicated. Food chains are simplified `caricatures' of the real world but still offer interesting information about various biological features. Most of the time it is more realistic to model a system as a \textit{food web}, consisting of an interconnection of food chains. However, in certain instances the model can be simplified to a single food chain and one can glean relevant information by analyzing the properties of the linear food chain analytically. For example, if one has a system with three species in which one is a plant, the second one is a herbivore and the third is a predator then there is no reason to have a direct link between the plant and the predator -- in this setting one would get a linear food chain (see \cite{MR12}).

All ecologists are aware that the world is complex. There is a divide between those who are skeptical of theories based on simplified models (see, for example, \cite{P91}) and those who think that `simple models can be used like a surgeon's knife, cutting deftly through the cloying of fat of complicating detail to get at the essential sinews of ecological reality' (\cite{T10}). Both from practical and analytical perspectives, theoretical models in ecology must significantly simplify the natural complexities. For food webs one of the main simplifications is the community module, that is represented by a food chain. One of the simplest models of food chains is the Lotka--Volterra one. Even though this model is imperfect and does not describe the behavior of any actual ecosystem, it nevertheless captures some key features which carry over to more realistic and analytically intractable models.

In this paper we study models of food chains of arbitrary length. We assume that there is only one species at each trophic level and that each species eats only the one on the adjacent lower trophic level. Furthermore, the ecosystem is supposed to have no immigration or emigration.

Many of the food-chain models studied in the literature are deterministic and of Lotka-Volterra type. Criteria for persistence and extinction have been studied by \cite{GH79, G80, FS85} while the global stability of nonnegative equilibrium points was studied by \cite{S79, H79}.

Usually, individuals of the same species have similar requirements for survival. Sometimes their combined demand for a resource is higher than the supply. As such the individuals have to compete for the resource and during this competition some of the individuals naturally become deprived and are therefore less likely to survive. Ecologists usually term this competition among individuals of the same species as \textit{intraspecific} competition. Arguably all populations experience some form of intraspecific competition. In particular, all vertebrate top predators in terrestrial ecosystems, with the possible exception of some reptiles, have strong intraspecific competition due to direct aggresion or territoriality (\cite{T10}).

In most situations, competing individuals do not interact with one another directly. One such situation occurs due to \textit{exploitation} where individuals are affected by the resource that is left after it has been consumed by others. Another form of indirect competition is due to \textit{interference}. This happens when one individual will prevent another from exploiting the resource within an area of the habitat. Yet another possible type of interaction between individuals of the same species is \textit{intraspecific predation}. This is the process of both killing and eating an individual of the same species. These observations suggest that one cannot always ignore intraspecific competition, as has been done in previous work.

One example of a deterministic Lotka-Volterra food chain is given by the system:
\begin{equation}\label{e:det}
\begin{split}
dx_1(t) &= x_1(t)(a_{10} - a_{11}x_1(t) - a_{12}x_2(t))\,dt\\
dx_2(t) &= x_2(t)(-a_{20} + a_{21}x_1(t) - a_{22}x_2(t)- a_{23}x_3(t))\,dt\\
&\mathrel{\makebox[\widthof{=}]{\vdots}} \\
dx_{n-1}(t) &= x_{n-1}(t)(-a_{n-1,0}+a_{n-1,n-2}x_{n-2}(t) -a_{n-1,n-1}x_{n-1}(t) - a_{n-1,n}x_n)\,dt\\
dx_n(t) &= x_n(t)(-a_{n0} + a_{n,n-1}x_{n-1}(t)- a_{n,n}x_n(t))\,dt.
\end{split}
\end{equation}

The quantities $(x_1(t),\dots,x_n(t))$ represent the densities of the $n$ species at time $t\geq 0$.
In this model $x_1$ describes a prey species, which is at the bottom of the food chain. The next $n-1$ species are predators. Species $1$ has a per-capita growth rate $a_{10}>0$ and its members compete for resources according to the intracompetition rate $a_{11}>0$. Predator species $j$ has a death rate $-a_{j0}<0$, preys upon species $j-1$ at rate $a_{j,j-1}>0$, competes with its own members at rate $a_{jj}> 0$ and is preyed upon by predator $j+1$ at rate $a_{j,j+1}>0$. The last species, $x_n$, is considered to be the apex predator of the food chain.

In the deterministic setting one says that the system \eqref{e:det} is \textit{persistent} if each solution of $\bx(t) = (x_1(t),\dots,x_n(t))$ with $\bx(0)\in \R_+^{n,\circ}:=((y_1,\dots,y_n): y_i>0, i=1,\dots,n)$ satisfies
\[
\limsup_{t\to\infty} x_i(t) >0, i=1,\dots,n.
\]
We say that species $i$ goes \textit{extinct} if
\[
\lim_{t\to \infty} x_i(t) = 0.
\]
It is natural to
analyze the coexistence of species by looking at the average per-capita growth rate of a
popualtion when it is rare. Intuitively, if this growth (or invasion) rate is positive, the respective
population increases when rare and can invade, while if the growth is negative, the population
decreases and goes extinct. If there are only two populations, coexistence is ensured if each population can invade when it is rare and the other population is stationary (\cite{T77, CE89, EHS15}).

There is a general theory for coexistence for deterministic models (\cite{H81, H84, HJ89}). It is shown that a sufficient condition for persistence is the existence of a fixed set of weights associated with the interacting populations such that this weighted combination of the populations's invasion rates is positive for any invariant measure supported by the boundary (i.e. associated to a sub-collection of populations) - see \cite{H81}.

In order to take into account environmental fluctuations and their effect on the persistence or extinction of species, one approach is to study systems that have random environmental perturbations. One way to do this is by analysing stochastic differential equations that arise by adding noise to ordinary differential equations. For \textit{compact} state spaces there are results for persistence in \cite{SBA11}. These results have been generalized in \cite{HN16} where the authors show how, under some natural assumptions, one can characterize the coexistence and extinction of species living on \textit{non-compact} state spaces. Some of these results hold not only for stochastic differential equations but also for stochastic difference equations (see \cite{SBA11}), piecewise deterministic Markov processes (see \cite{BL16, HS17}), and for general Markov processes (see \cite{B14}).

One stochastic version of \eqref{e:det} is the process $\BX:=(\BX(t))_{t\geq 0}=(X_1(t),\dots,X_n(t))_{t\geq 0}$ defined by the system of stochastic differential equations

\begin{equation}\label{e:stoc}
\begin{split}
dX_1(t) &= X_1(t)(a_{10} - a_{11}X_1(t) - a_{12}X_2(t))\,dt + X_1(t)\,dE_1(t)\\
dX_2(t) &= X_2(t)(-a_{20} + a_{21}X_1(t) - a_{22}X_2(t) - a_{23}X_3(t))\,dt+X_2(t)\,dE_2(t)\\
&\mathrel{\makebox[\widthof{=}]{\vdots}} \\
dX_{n-1}(t) &= X_{n-1}(t)(-a_{n-1,0}+a_{n-1,n-2}X_{n-2}(t) -a_{n-1,n-1}X_{n-1}(t)- a_{n-1,n}X_n)\,dt\\
&~~~~~+X_{n-1}(t)\,dE_{n-1}(t)\\
dX_n(t) &= X_n(t)(-a_{n0} + a_{n,n-1}X_{n-1}(t)-a_{nn}X_n(t))\,dt + X_n(t)\,dE_n(t)
\end{split}
\end{equation}
where $\BE(t)=(E_1(t),\dots, E_n(t))^T=\Gamma^\top\BB(t)$ for an $n\times n$ matrix
$\Gamma$ such that
$\Gamma^\top\Gamma=\Sigma=(\sigma_{ij})_{n\times n}$
and $\BB(t)=(B_1(t),\dots, B_n(t))$ is a vector of independent standard Brownian motions living on the probability space $(\Omega,\F,\{\F_t\}_{t\geq0},\PP)$ with a filtration $\{\F_t\}_{t\geq 0}$ satisfying the usual conditions. We denote by $\PP_\bx$ (respectively $\E_\bx$) the probability measure (respectively the expected value) conditioned on $\BX(0)=(X_1(0),\dots,X_n(0))=\bx\in\R_+^n$.

\begin{rmk}
There are a few different ways to add stochastic noise to deterministic population dynamics. We assume that the environment mainly affects the growth/death rates of the populations. This way, the growth/death rates in an ODE (ordinary differential equation) model are replaced by their average values plus random noise fluctuation terms. See \cite{T77, B02, G88, HNY16, EHS15, ERSS13, SBA11, HN16, G84} for more details.
\end{rmk}
Define the stochastic growth rate $\tilde a_{10} := a_{10}-\frac{\sigma_{11}}{2}$ and the stochastic death rates $\tilde a_{j0} := a_{j0} + \frac{\sigma_{jj}}{2}, j=1,\dots,n$. For fixed $j\in \{1,\dots,n\}$ write down the system
\begin{equation}\label{e:system}
\begin{split}
-a_{11}x_1 - a_{12}x_2 &= -\tilde a_{10}\\
a_{21}x_1 - a_{22}x_2 - a_{23}x_3 &=\tilde a_{20}\\
&\mathrel{\makebox[\widthof{=}]{\vdots}} \\
a_{j-1,j-2}x_{j-2} - a_{j-1,j-1}x_{j-1} -a_{j-1,j}x_{j} &= \tilde a_{j-1,0}\\
a_{j,j-1}x_{j-1} - a_{jj}x_j &= \tilde a_{j0}.
\end{split}
\end{equation}
It is easy to show that \eqref{e:system} has a unique solution, say $(x^{(j)}_1,\dots,x^{(j)}_j)$.
Define
\begin{equation}\label{e:inv_j+1}
\I_{j+1} = -\tilde a_{j+1,0} + a_{j+1,j} x^{(j)}_j.
\end{equation}
We will show that, if \eqref{e:system} has a strictly positive solution $(x^{(j)}_1,\dots,x^{(j)}_j)$, the invasion rate of predator $X_{j+1}$ in the habitat of $(X_1,\dots,X_j)$ is given by \eqref{e:inv_j+1}.
The invasion rate of predator $X_{j+1}$ is the asymptotic logarithmic growth $\lim_{t\to\infty}\frac{\log X_{j+1}(t)}{t}$ when $X_{j+1}$ is introduced at a low density in $(X_1,\dots,X_j)$.
We also set $\I_1:=\tilde a_{10}$ to be the stochastic growth rate of the prey - this can be seen as the invasion rate of the prey into the habitat, when it is introduced at low densities.

Throughout the paper we define $\R^n_+=[0,\infty)^n$ and
for $j=1,\dots,n$ $$\R_+^{(j)}:=\{\bx=(x_1,\dots,x_n)\in\R^n_+: x_{k}=0\,\text{ for } j<k\leq n\} \subset \R_+^n,$$
and
$$\R_+^{(j),\circ}:=\{\bx=(x_1,\dots,x_n)\in\R^n_+: x_k>0\,\text{ for } k\leq j; x_{k}=0\,\text{ for } j<k\leq n\}.$$

\begin{deff}
One can define a distance on the space of probability measures living on the space $(\R_+^n,\mathcal{B}(\R_+^n))$ i.e. the Borel measurable subsets of $\R_+^n$. This is done by defining $\|\cdot,\cdot\|_{\text{TV}}$, the \textit {total variation norm}, via
\[
\|\mu,\nu\|_{\text{TV}} := \sup_{A\in \mathcal{B}(\R_+^n)} |\mu(A)-\nu(A)|.
\]
\end{deff}

There are different ways one can define the persistence and extinction of species. We review some of these definitions below.

For a system to be strongly stochastically persistent we require that there exists a unique invariant measure $\pi^*$ that does not put any mass on the extinction set $S:=\{\bx\in\R_+^n: \Pi_{i=1}^nx_i=0\}$ and that the distribution of $\BX$ converges in some sense to $\pi^*$.

\begin{deff}
The process $\BX$ is \textbf{strongly stochastically persistent} if it has a unique invariant probability measure $\pi^*$ on $\R^{n,\circ}_+$ and converges weakly to $\pi^*$, that is
\[
P_\BX(t,\mathbf{x},\cdot)\Rightarrow \pi^*, ~\text{as}~t\to\infty, ~\ \bx\in\R_+^{n,\circ}
\]
where $P_\BX(t,\mathbf{x},\cdot)$ is the transition probability of $\BX$. This means that for any continuous function
  $f:\R^n_+\mapsto\R$ with $\sup_{\bx\in\R^n_+}|f(\bx)|\leq 1$ and any $\bx_0\in\R^{n,\circ}_+$
\begin{equation*}\label{e19-thm1.1}
\lim_{t\to\infty}\E_{\bx_0}f(\BX(t)) =  \int_{\R^n_+} f(\bx')\pi_{j^*}(d\bx').
\end{equation*}
\end{deff}
\begin{rmk}
We note that if
\begin{equation*}
\lim\limits_{t\to\infty} \|P_\BX(t, \mathbf{x}, \cdot)-\pi^*(\cdot)\|_{\text{TV}}=0,
\end{equation*}
then
\[
P_\BX(t,\mathbf{x},\cdot)\Rightarrow \pi^*,  ~\text{as}~t\to\infty
\]
so that convergence in total variation implies weak convergence.
\end{rmk}
\begin{deff}
The species $X_i$ goes \textbf{extinct} if for all $\bx\in\R^{n,\circ}_+$
\[
\PP_\bx\left\{\lim_{t\to\infty}X_i(t)=0\right\}=1.
\]
\end{deff}
\begin{deff}
The species $(X_1,\dots,X_{j^*})$ are \textbf{ persistent in probability} if
for any $\eps>0$, there exists a compact set $K_\eps\subset\R^{(j^*),\circ}_+$ such that
\[
\liminf_{t\to\infty}\PP_\bx\left\{(X_1(s),\dots, X_k(s))\in K_\eps\right\}\,ds \geq 1-\eps, \text{ for any }\,\bx\in\R^{n,\circ}_+,
\]
where $\left(x^{(j^*)}_1,\dots,x^{(j^*)}_{j^*}\right)\in\R_+^{(j^*),\circ} $is the unique solution to \eqref{e:system} with $j=j^*$
\end{deff}
We refer the reader to \cite{S12} for a discussion of various forms of persistence. With the above concepts in hand we can formulate our main result.
\begin{thm}\label{t:main}
Suppose $n\geq 2$, and $\BX(0)=\bx\in\R_+^{n,\circ}$. We have the following classification.
\begin{itemize}
\item [(i)] If $\I_n>0$ then $(X_1,\dots,X_n)$ is persistent in probability.
Moreover,
\begin{equation}\label{e0a-thm1.1}
\PP_\bx\left\{\lim_{t\to\infty}\dfrac1t\int_0^t X_k(s)\,ds = x^{(n)}_k>0, k=1,\dots,n\right\}=1
\end{equation}
where $\left(x^{(n)}_1,\dots,x^{(n)}_{n}\right)\in\R_+^{(n),\circ} $is the unique solution of \eqref{e:system} with $j=n$.

If $\Sigma$ is positive definite, making the noise non-degenerate, then the food chain $\BX$
is strongly stochastically persistent and its transition probability converges to its unique invariant probability measure $\pi^{(n)}$ on $\R_+^{n,\circ}$ exponentially fast in total variation.
\item [(ii)] If there exists $0\leq j^*<n$ such that $\I_{j^*}>0$ and $\I_{j^*+1}<0$ then $X_{j^*+1},\dots,X_n$ go extinct almost surely exponentially fast, as $t\to\infty$, with rates  $\I_{j^*+1},-\tilde{a}_{j^*+2,0},\dots,-\tilde{a}_{n0}$ respectively.
Furthermore, $(X_1,\dots,X_{j^*})$ is persistent in probability and with probability $1$
$$
\lim_{t\to\infty}\dfrac1t\int_0^t X_i(s)ds=
\begin{cases}
x^{(j^*)}_i\,&\text{ if } i=1,\dots,j^*,\\
0\,&\text{ if } i=j^*+1,\dots, n.
\end{cases}
$$
where $\left(x^{(j^*)}_1,\dots,x^{(j^*)}_{j^*}\right)\in\R_+^{(j^*),\circ} $is the unique solution of \eqref{e:system} with $j=j^*$.

\item [(iii)] Suppose that $\I_{j^*}>0$ and $\I_{j^*+1}<0$ for some $j^*<n$.
Suppose further that there exists a unique invariant probability measure $\pi_{j^*}$
on $\R^{(j^*),\circ}_+$ such that the transition probability measure
of $\BX$ restricted on $\R^{(j^*),\circ}_+$ converges weakly uniformly on each compact set to $\pi_{j^*}$. By this we mean that for any continuous and bounded function: $f:\R^{(j^*),\circ}_+\mapsto\R_+$
and for any compact set $K\subset \R^{(j^*),\circ}_+$, we have
\begin{equation}\label{uwc}
\lim_{t\to\infty}\left(\sup_{\bx\in K}\left|\int_{\R^{(j^*),\circ}_+} f(\bx')\pi_{j^*}(d\bx')-\E_{\bx}f(\BX(t))\right|\right)=0.
\end{equation}
Then for any $\bx\in\R^{n,\circ}_+$,
the transition probability measure $P(t,\bx,\cdot)$
of $\BX$ converges weakly to $\pi_{j^*}$ and as a result $(X_1,\dots,X_{j^*})$ is strongly stochastically persistent.

\item [(iv)]Suppose that $\I_{j^*}>0, \I_{j^*+1}<0$ for some $j^*<n$ and $\Sigma_{j^*}$, the principal submatrix of $\Sigma$ obtained by removing the $j^*+1$-th,\dots, $n$-th rows and columns of $\Sigma$, is positive definite.  Then for any $\bx\in\R^{n,\circ}_+$,
the transition probability measure $P(t,\bx,\cdot)$ of $\BX$ converges weakly to $\pi_{j^*}$  and as a result $(X_1,\dots,X_{j^*})$ is strongly stochastically persistent.
\end{itemize}
\end{thm}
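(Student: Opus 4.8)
My plan is to derive the entire classification from the general stochastic persistence/extinction theory of \cite{HN16}, once \eqref{e:stoc} has been placed in that framework. The first step is to verify the standing hypotheses of \cite{HN16}: \eqref{e:stoc} is a stochastic Kolmogorov system $dX_i=X_if_i(\BX)\,dt+X_i\,dE_i(t)$ on $\R^n_+$, and the strictly positive intraspecific competition rates $a_{ii}$ provide the dissipativity needed for non-explosion and tightness. Concretely I would produce a Lyapunov function $V(\bx)=\sum_i c_i x_i$ with positive weights $c_i$ chosen so that the predation terms telescope along the chain and the quadratic damping $-a_{ii}x_i^2$ dominates, giving $LV\le C_1-C_2V$ outside a compact set. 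This yields the global existence of $\BX$ on $\R_+^n$ and the tightness of its occupation measures.

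The core of the argument is the analysis of the invariant measures on the extinction boundary together with their invasion rates. Because species $j+1$ preys only on species $j$, the support of any ergodic boundary measure must be a down-set for the chain order: if species $k$ is absent then species $k+1,\dots,n$, having lost their food source, cannot be sustained and are absent as well. Hence every ergodic measure on the boundary lives on a face $\R_+^{(j),\circ}$, $0\le j\le n-1$, on which \eqref{e:stoc} reduces to a food chain of length $j$; arguing inductively, when $\I_1,\dots,\I_j>0$ this sub-chain is persistent with a unique invariant measure $\pi^{(j)}$. The decisive identification is that the invasion rate of $X_{j+1}$ with respect to $\pi^{(j)}$ equals $\I_{j+1}$. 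To see this I would write $\tfrac1t\log X_k(t)=\tfrac1t\log X_k(0)+\tfrac1t\int_0^tf_k(\BX(s))\,ds+\tfrac1tE_k(t)$ for $k\le j$; persistence sends the left-hand side and the martingale term to $0$, so the ergodic averages $\bar x_k:=\lim_t\tfrac1t\int_0^tX_k\,ds$ satisfy the linear relations \eqref{e:system} and therefore $\bar x_k=x^{(j)}_k$. Feeding this into the equation for $X_{j+1}$ gives $\lim_t\tfrac1t\log X_{j+1}(t)=-\tilde a_{j+1,0}+a_{j+1,j}x^{(j)}_j=\I_{j+1}$.

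Each part now follows from the criteria of \cite{HN16}. For (i), the very fact that $\I_n$ is defined forces, through the nested solvability of \eqref{e:system}, that $\I_1,\dots,\I_n>0$, so at every boundary face $\R_+^{(j),\circ}$ the missing species $X_{j+1}$ has strictly positive invasion rate $\I_{j+1}$; the stochastic Hofbauer-type persistence condition of \cite{HN16} is thus met with a suitable weighting, giving persistence in probability, and the strong law \eqref{e0a-thm1.1} is exactly the ergodic identification $\bar x_k=x^{(n)}_k$ above. When $\Sigma$ is positive definite the diffusion is non-degenerate and irreducible on $\R_+^{n,\circ}$, so the invariant measure $\pi^{(n)}$ is unique and, by the Harris-type estimates of \cite{HN16}, the convergence in total variation is exponential. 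For (ii), $\I_{j^*+1}<0$ means $X_{j^*+1}$ cannot invade $\pi^{(j^*)}$, whence $\tfrac1t\log X_{j^*+1}(t)\to\I_{j^*+1}<0$ and $X_{j^*+1}\to0$ exponentially; substituting $X_{j^*+1}\to0$ into the $X_{j^*+2}$ equation removes its only positive term and leaves the intrinsic decay $-\tilde a_{j^*+2,0}$, and iterating this cascade up the chain yields the rates $-\tilde a_{k0}$ for $k>j^*+1$, while the surviving sub-chain $(X_1,\dots,X_{j^*})$ persists with the asserted averages $x^{(j^*)}_i$.

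Part (iii) is a lifting step: once $X_{j^*+1},\dots,X_n\to0$, the long-run behaviour of $\BX$ is governed by the $(X_1,\dots,X_{j^*})$ sub-chain, so the assumed uniform-on-compacts weak convergence \eqref{uwc} of the restricted process to $\pi_{j^*}$ can be transported to $\R_+^{n,\circ}$ by an approximation argument that controls the vanishing upper coordinates, giving $P(t,\bx,\cdot)\Rightarrow\pi_{j^*}$ and hence strong stochastic persistence of the sub-chain. Part (iv) is then immediate: positive definiteness of $\Sigma_{j^*}$ renders the sub-chain non-degenerate, so \cite{HN16} applied on $\R_+^{(j^*),\circ}$ furnishes precisely the unique $\pi_{j^*}$ and the uniform convergence \eqref{uwc} demanded by (iii). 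I expect the two main obstacles to be, first, making the extinction cascade in (ii) rigorous with its sharp exponential rates, since each predator's decay rate can only be pinned down after its prey has been shown to vanish, so the estimates must be carried out sequentially and uniformly in the initial data; and second, the weak-convergence lifting of (iii), where the degeneracy of the noise in the extinction directions blocks a direct appeal to the non-degenerate theory and forces a careful comparison between the full and the reduced dynamics.
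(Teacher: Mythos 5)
Your plan follows essentially the same route as the paper: cast \eqref{e:stoc} in the framework of \cite{HN16}, show that every ergodic boundary measure is supported on a gap-free face $\R_+^{(j),\circ}$, identify the Lyapunov exponents of these measures with the invasion rates $\I_{j+1}$ through the ergodic averages solving \eqref{e:system}, and then read off persistence or extinction from the signs of $\I_n$ and $\I_{j^*+1}$, with (iii) proved by a lifting/approximation argument and (iv) reduced to (iii). Two steps need tightening. First, your claim that ``the very fact that $\I_n$ is defined forces $\I_1,\dots,\I_n>0$'' is not correct as stated: the linear system \eqref{e:system} always has a unique (possibly non-positive) solution, so $\I_n$ is always defined; what is true, and what the paper establishes via the explicit sign identity $x_{j}^{(j)}=\I_{j}/\bigl(a_{jj}+a_{j,j-1}c'_{j-1}\bigr)$ (Proposition \ref{p:inv_sol} and Lemma \ref{lm3.2}), is that $\I_n>0$ forces $\I_j>0$ for every $j\le n$; without this identity (or an equivalent downward induction) you cannot conclude that the missing species has positive invasion rate on \emph{every} boundary face. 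Second, the persistence criterion of \cite{HN16} must be checked for every measure in $\Conv(\M)$, not just the ergodic ones; the paper handles a general convex combination $\mu=\sum_j\rho_j\mu_{i_j}$ by selecting the minimal face index $i_1$ and observing that $\lambda_{i_1+1}(\mu_{i_j})=0$ for $j\ge 2$ while $\lambda_{i_1+1}(\mu_{i_1})=\I_{i_1+1}>0$, so $\lambda_{i_1+1}(\mu)>0$ --- your appeal to ``a suitable weighting'' should be replaced by this explicit argument.
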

\begin{rmk}
If $\Sigma$ is positive definite then all principal submatrices are positive definite, so in particular $\Sigma_{j^*}$ from Theorem \ref{t:main} part (iv) is positive definite.
\end{rmk}
\begin{rmk}\label{r:extra_predator}
We note that by Theorem \ref{t:main} the food chain persists when $\I_n>0$ and goes extinct when $I_{j^*+1}<1$ for some $j^*\leq n-1$.  It is key to note that $\I_j$ is independent of the coefficients $(a_{lm}), l>j$.

As such, if we add one extra predator at the top of the food chain the quantities  $\I_j>0, j=2,\dots,n$ remain unchanged and we get one extra invasion rate $\I_{n+1}$.
In this setting, when we have $n$ predators, the system persists if $\I_{n+1}>0$ and goes extinct if  $I_{j^*+1}<1$ for some $j^*\leq n$. This means that the introduction of an apex predator makes extinction \textbf{more likely}.
\end{rmk}

\begin{rmk}
The persistence or extinction of species evolving according to system \eqref{e:det} when the intraspecies competition for predators is zero (i.e. $a_{ii}=0, i\geq 2$) has been studied by \cite{GH79}. \cite{HN17} generalized the results from \cite{GH79} to a stochastic setting. The current paper tackles the case when intraspecies competition is nonzero. We get stronger results than in the case without intracompetition because we are able to make use of the general results from \cite{HN16}. From a technical point of view, strictly positive intracompetition rates make the process return to compact sets exponentially fast. This fact can then be used to prove exponential convergence to an invariant probability measure or extinction.
\end{rmk}

Most of the results for stochastic food chains only consider chains of length two. We note that our results are new even in the case of food chains of length three.

Theorem \ref{t:main} extends previous results on stochastic Lotka-Volterra systems in two dimensions (see \cite{LB16, HN16, R03}) to an $n$ dimensional setting. We also generalize the work by \cite{G84} where the author gives sufficient conditions for persistence of stochastic Lotka-Volterra type food web models in bounded regions of state space. We note that the main results of \cite{G84} only say something about persistence until the first exit time of the process from a compact rectangular region $R_\gamma\subset \R_+^{n,\circ}$. Once the process exits the region, one cannot say whether the species persist or not. Partial results for the existence of invariant probability measures for stochastic Lotka-Volterra systems have been found in \cite{P79}. However, these conditions are quite restrictive and impose artificial constraints on the interaction coefficients. In contrast, our results for persistence and extinction are sufficient and (almost) necessary. Moreover, based on which conditions are satisfied, we can say exactly which species persist and which go extinct.

The paper is organized as follows. In Section \ref{s:math} we present the mathematical framework from \cite{HN16} and explain how we can apply it in the current context. The proof of Theorem \ref{t:main} is presented in Appendix \ref{s:proofs}. General properties regarding the invasion rates and algorithms for how one can compute these invasion rates appear in Section \ref{s:inv}. In Section \ref{s:cascade} we study a plant--herbivore--predaftor food chain and look at the \textit{trophic cascade} effect the predator has. Finally, Section \ref{s:disc} is devoted to discussing our results and comparing them to the literature.

\section{Mathematical framework}\label{s:math}
We rewrite \eqref{e:stoc} as

\begin{equation}\label{e:system_2}
dX_i(t)=X_i(t) f_i(\BX(t))dt+X_i(t)dE_i(t), ~i=1,\dots,n
\end{equation}
where $\BX(t):=(X_1(t),\dots,X_n(t))$. This is a stochastic process that takes values in $\R_+^{n}:=[0,\infty)^n$ and defined on a complete probability space $(\Omega,\F,\{\F_t\}_{t\geq0},\PP)$ with a filtration $\{\F_t\}_{t\geq 0}$ satisfying the usual conditions.
We mainly focus on the process $\BX$ starting at $\bx\in\R^{n,\circ}_+=(0,\infty)^n$.
The random normalized occupation measures are defined as $$\wtd \Pi_t(B):=\dfrac1t\int_0^t\1_{\{\BX(s)\in\cdot\}}ds,\,t>0, B\in\mathcal{B}(\R_+^{n})$$
where $\mathcal{B}(\R_+^{n})$ are the Borel measurable subsets of $\R_+^{n}$. Note that $\wtd \Pi_t(B)$ tells us the fraction of time the process $\BX$ spends in the set $B$ during the duration $[0,t]$.

Let $\M$ be the set of ergodic invariant probability measures of $\BX$ supported on the boundary $\partial\R^n_+:=\R_+^n\setminus \R_+^{n,\circ}$. For a subset $\wtd\M\subset \M$, denote by $\Conv(\wtd\M)$ the convex hull of $\wtd\M$,
that is the set of probability measures $\pi$ of the form
$\pi(\cdot)=\sum_{\mu\in\wtd\M}p_\mu\mu(\cdot)$
with $p_\mu>0,\sum_{\mu\in\wtd\M}p_\mu=1$.

Note that each subspace of $\R^n_+$ of the form
$$\Big\{(x_1,\dots,x_n)\in\R^n_+: x_i>0 \text{ for } i\in\{\tilde n_1,\dots,\tilde n_k\}; \text{ and }x_i=0\text{ if } i\notin\{\tilde n_1,\dots,\tilde n_k\} \Big\}$$
for some $\tilde n_1,\dots,\tilde n_k\in\N$
satisfying $0<\tilde n_1<\dots<\tilde n_k\leq n$
is an invariant set for the process $\BX$.
Thus, any ergodic measure $\mu\in\M$
must be supported in such a subspace, that is,
there exist $0<n_1<\dots< n_k\leq n$
(if $k=0$, there are no $n_1,\dots, n_k$)
such that $\mu(\R^{\mu,\circ}_+)=1$ where
$$\R_+^\mu:=\{(x_1,\dots,x_n)\in\R^n_+: x_i=0\text{ if } i\in I_\mu^c\}$$
for
$I_\mu:=\{n_1,\dots, n_k\}$,
$I_\mu^c:=\{1,\dots,n\}\setminus\{n_1,\dots, n_k\}$,
$$\R_+^{\mu,\circ}:=\{(x_1,\dots,x_n)\in\R^n_+: x_i=0\text{ if } i\in I_\mu^c\text{ and }x_i>0\text{ if  }x_i\in I_\mu\},$$ and $\partial\R_+^{\mu}:=\R_+^n\setminus\R_+^{\mu,\circ}$. For the Dirac measure $\bdelta^*$ concentrated at the origin $0$, we have $I_{\bdelta^*}=\emptyset$

\begin{rmk}
Note that $\Conv(\M)$ is exactly the set of invariant probability measures of the process $\BX$ supported on the boundary $\partial \R_+^{n}$.
\end{rmk}

For a probability measure  $\mu$ on $\R^n_+$ we define the $i$th Lyapunov exponent (when it exists) via
\begin{equation}\label{Lya.exp}
\begin{aligned}
\lambda_j(\mu):=&\int_{\R^n_+}\left(f_j(\bx)-\dfrac{\sigma_{jj}}2\right)\mu(d\bx)\\
=&
\begin{cases}
\int_{\R^n_+}\left(\tilde a_{10}-a_{11}x_1 - a_{12}x_2\right)\mu(d\bx) &\text{ if }\, j=1,\\
\int_{\R^n_+}\left(-\tilde a_{n0}+a_{n,n-1}x_{n-1}-a_{n,n}x_{n}\right)\mu(d\bx)& \text{ if } j=n,\\
\int_{\R^n_+}\left(-\tilde a_{j,0}+a_{j,j-1}x_{j-1}-a_{j,j}x_{j}  -a_{j,j+1}x_{j+1}\right)\mu(d\bx)& \text{ otherwise}.
\end{cases}
\end{aligned}
\end{equation}

\begin{rmk}
To determine the Lyapunov exponents of an ergodic invariant probability measure $\mu\in\M$,
one can look at the equation for $\ln X_i(t)$. An application of It\^o's Lemma yields that		
$$
\dfrac{\ln X_i(t)}t=\dfrac{\ln X_i(0)}t+\dfrac1t\int_0^t\left[f_i(\BX(s))-\dfrac{\sigma_{ii}}2\right]ds+\dfrac1t\int_0^t dE_i(s).
$$

If $\BX$ is close to the support of an ergodic invariant measure $\mu$ for a long time $t\gg 1$,
then
$$\dfrac1t\int_0^t\left[f_i(\BX(s))-\dfrac{\sigma_{ii}}2\right]ds$$
can be approximated by the average with respect to $\mu$
$$\lambda_i(\mu)=\int_{\partial \R^n_+}\left(f_i(\bx)-\dfrac{\sigma_{ii}}2\right)\mu(d\bx).$$
On the other hand, the term $$\dfrac{\ln X_i(0)}t+\frac{E_i(t)}{ t}$$ is negligible for large $t$ since
\[
\PP_\bx \left\{\lim_{t\to\infty} \left(\dfrac{\ln X_i(0)}t+\frac{E_i(t)}{ t}\right)=0\right\}=1.
\]
This implies that $\lambda_i(\mu), i=1,\dots, n$ are the Lyapunov exponents of $\mu$.
\end{rmk}
For $\bx=(x_1,\dots,x_n)\in\R^n$,
we define  the norm $\|\bx\|=\max_{i=1}^n|x_i|$.
Let
\begin{equation}\label{e.c}
\bc=(c_1,\dots,c_n)\in\R^{n,\circ}_+,\text{ where }\, c_1=1, c_i:=\prod_{j=2}^i\dfrac{a_{k-1,k}}{a_{k,k-1}}, i\geq 2.
\end{equation}
 One can easily check that there exists  $\gamma_b>0$ such that
\begin{equation}\label{a.tight}
\limsup\limits_{\|x\|\to\infty}\left[\dfrac{\sum_i c_ix_if_i(\bx)}{1+\sum_i c_ix_i}-\dfrac12\dfrac{\sum_{i,j} \sigma_{ij}c_ic_jx_ix_j}{(1+\sum_i c_ix_i)^2}+\gamma_b\left(1+\sum_{i} (|f_i(\bx)|)\right)\right]<0.
\end{equation}
Then parts (2) and (3) of Assumption 1.1 in \cite{HN16} are satisfied and one gets the existence and uniqueness of strong solutions to \eqref{e:system_2}. Moreover, if $\BX(0)=\bx \in \R_+^{n,\circ}$ then
\[
\PP_\bx\{\BX(t)\in \R_+^{n,\circ}, t\geq 0\} = 1.
\]
In view of \cite[Lemma 2.3]{HN16}, for $\mu\in\M$, $\lambda_i(\mu)$ is well-defined and
\begin{equation}\label{e:lambda_0}
\lambda_i(\mu) = 0, i\in I_\mu.
\end{equation}
The intuition behind equation \eqref{e:lambda_0} is the following: if we are inside the support of an ergodic invariant measure $\mu$ then we are at an `equilibrium' and the process does not tend to grow or decay.
If  $\mu$ is an invariant probability measure satisfying $\mu(\R_+^{(j),\circ})=1$ then
we derive from \eqref{e:lambda_0} that
\begin{equation}\label{e.meanmu}
\E_{\mu}X_i=\int_{\R^n_+} x_i\mu(d\bx)=x_i^{(j)} \,\text{ for } i\leq j.
\end{equation}
That is, the solution of \eqref{e:system} is the vector $(\E_{\mu} X_1,\dots, \E_{\mu} X_j)$ of the expected values of $(X_1,\dots,X_j)$ at stationarity.

The following assumption is shown in \cite{HN16} to imply strong stochastic persistence

\begin{asp}\label{a.coexn}
For any $\mu\in\Conv(\M)$ one has
$$\max_{\{i=1,\dots,n\}}\left\{\lambda_i(\mu)\right\}>0.$$
\end{asp}

Extinction is ensured by the following two assumptions.
\begin{asp}\label{a.extn}
There exists $\mu\in\M$ such that
\begin{equation}\label{ae3.1}
\max_{i\in I_\mu^c}\{\lambda_i(\mu)\}<0.
\end{equation}
If $\\R_+^n\ne\{\0\}$, suppose further that
for any $\nu\in\Conv(\M_\mu)$ , we have
\begin{equation}\label{ae3.2}
\max_{i\in I_\mu}\{\lambda_i(\nu)\}>0
\end{equation}
where $\M_\mu:=\{\nu'\in\M:\suppo(\nu')\subset\partial\R_+^n\}.$
\end{asp}
Define
\begin{equation}\label{e:M1}
\M^1:=\left\{\mu\in\M : \mu ~~\text{satisfies Assumption} ~~\ref{a.extn}\right\}
\end{equation}
and
\begin{equation}\label{e:M2}
\M^2:=\M\setminus\M^1.
\end{equation}

\begin{asp}\label{a.extn3}
Suppose that one of the following is true
\begin{itemize}
  \item $\M^2=\emptyset$

  \item For any $\nu\in\Conv(\M^2)$, $\max_{\{i=1,\dots,n\}}\left\{\lambda_i(\nu)\right\}>0.$
\end{itemize}
\end{asp}

\begin{rmk}
We refer the reader to \cite{HN16} for a detailed discussion of the above assumptions. In short
\begin{itemize}
\item From a dynamical point of view, the solution in the interior domain $\R^{n,\circ}_+$
is persistent if every invariant probability measure on the boundary
is a ``repeller''. In a deterministic setting, an equilibrium is a repeller
if it has a positive Lyapunov exponent (or the eigenvalue of the Jacobian).
In a stochastic model, the ergodic invariant measures $\mu\in\M$ play the same role. The $\lambda_i(\mu), i=1,\dots, n$ are the Lyapunov exponents of $\mu$
(it can also be seen that $\lambda_i(\mu)$ gives the long-term growth rate of $X_i(t)$
if $\BX$ is close to the support of $\mu$). As a result,
if $\max_{i=1}^n  \{\lambda_i(\mu)\}>0$,
then the invariant measure $\mu$ is a ``repeller''.
Therefore, Assumption \ref{a.coexn} guarantees the persistence of the population.
\item If an ergodic invariant measure $\mu$ with support on the boundary is an ``attractor'',
it will attract solutions starting nearby.
Intuitively, condition \eqref{ae3.1} forces $X_i(t), i\in I_\mu^c$ to get close to $0$
if the solution starts close to $\R^{\mu,\circ}_+$.
\item In order to characterize extinction we need the additional Assumption \ref{a.extn3} which ensures that apart from those in $\Conv(\M^1)$,
invariant probability measures are ``repellers''.
\end{itemize}
\end{rmk}

\begin{rmk}
The quantity $\lambda_i(\mu)$ can be interpreted as the stochastic growth rate of species $X_i$ when introduced at a low density in the habitat consisting of species $\{X_j, j\in I_\mu\}$. Since $\mu$ is a invariant probability measure, the growth rate of any $X_j, j\in I_\mu$ is $0$. 
\end{rmk}



\begin{example}\label{ex:1}
Let us start by analyzing the one-dimensional equation for the prey

\[
dZ_1(t) = Z_1(t)(a_{10} - a_{11}Z_1(t))\,dt + Z_1(t)\,dE_1(t).
\]

In this case $\M=\{\bdelta^*\}$. One can than easily check that
\[
\lambda_1(\bdelta^*) = a_{10} - \frac{\sigma_{11}}{2} = \tilde a_{10}.
\]
According to \cite[Example 6.2]{HN16} if $\tilde a_{10}>0$ there exists a unique invariant probability measure $\pi^{(1)}$ on $\R_+^{\circ}$ and $Z_1$ converges exponentially fast to $\pi^{(1)}$. If $\tilde a_{10}<0$ then $Z_1$ goes extinct. Next, assume one has the prey and one predator

\begin{equation*}
\begin{split}
dZ_1(t) &= Z_1(t)(a_{10} - a_{11}Z_1(t) - a_{12}Z_2(t))\,dt + Z_1(t)\,dE_1(t)\\
dZ_2(t) &= Z_2(t)(-a_{20} + a_{21}Z_1(t) - a_{22}Z_2(t))\,dt+Z_2(t)\,dE_2(t)\\
\end{split}
\end{equation*}
In view of the analysis from \cite[Example 6.2]{HN16} if $\I_1=\lambda_1(\bdelta^*)=\tilde a_{10}<0$ then $X_1(t), X_2(t)$ converge to 0 almost surely with the exponential rates $\I_1=\tilde a_{10}$ and $\lambda_2(\bdelta^*)=-a_{20}-0.5\sigma_{22}$ respectively.

If $\I_1>0$  then there exists an invariant measure $\mu_1$ on $\R^{\circ}_{1+}:=\{(x_1,0): x_1>0\}$ and
\[
\lambda_1(\mu_1) = \tilde a_{10} - a_{11} \int_{\partial \R_+^{2,\circ}}z d\mu_1 = 0
\]
Now one can compute
\[
\I_2=\lambda_2(\mu_1) = -\tilde a_{20} + a_{21} \int_{\partial \R_+^{2,\circ}}z d\mu_1 = -\tilde a_{20} + a_{21} \frac{\tilde a_{10}}{a_{11}}.
\]

If  $\I_1>0, \I_2<0$ then
$Z_2$ converges to $0$ almost surely with the exponential rate $\lambda_2(\mu_1)$ and the occupation measure of the process $(Z_1,Z_2)$ converges to $\mu_1$.

If $\I_1>0, \I_2>0$ the transition probability of $(Z_1(t), Z_2(t))$ on $\R^\circ_{12+}$ converges to an invariant probability measure in total variation with an exponential rate. The case with two predators is treated in \cite[Example 6.2]{HN16}.

\end{example}

\section{Properties of the invasion rates}\label{s:inv}
We want to say more about the invasion rates $\I_{n+1}$. For this we note by \eqref{e:inv_j+1} that we have to analyze the system \eqref{e:system}. This can be written in matrix form as
\begin{equation}\label{e:inv_mat}
A \bx^{(n)} = \ba
\end{equation}
where $\bx^{(n)} = \left(x_1^{(n)},\dots,x_n^{(n)}\right)^T$, $\ba = (-\tilde a_{10},\tilde a_{20},\tilde a_{30},\dots, \tilde a_{n0})^T$ and
\[
A= \begin{bmatrix}
    -a_{11} & -a_{12} & 0 &\dots  & 0 &0 \\
    a_{21} & -a_{22} & -a_{23}&\dots  & 0 &0 \\
    0 & a_{32} & -a_{33}&\dots  & 0 &0 \\
    \vdots & \vdots & \vdots & \ddots & \vdots & \vdots\\
    0 & 0 & 0 & \dots  &-a_{n-1,n-1}&-a_{n-1,n}\\
     0 & 0 & 0 & \dots  &a_{n,n-1}&-a_{n,n}
\end{bmatrix}
\]
is a tridiagonal $n\times n$ matrix.

It is well-known that the solution can be obtained by a forward sweep that is a special case of Gaussian elimination (see \cite{M01}). To simplify notation we let
\[
(d_1,\dots,d_n)^T :=(-\tilde a_{10},\tilde a_{20},\tilde a_{30},\dots, \tilde a_{n0})^T,
\]
\[
(c_1,\dots,c_{n-1})^T := (-a_{12},-a_{23},\dots,-a_{n-1,n})^T,
\]
\[
(b_1,\dots,b_n)^T:= (-a_{11},\dots,-a_{nn})
\]

and
\[
(f_2,\dots,f_n)^T := (a_{21}, a_{32},\dots,a_{n,n-1})^T.
\]

Define new coefficients $(c_1',\dots,c_{j-1}'), (d_1',\dots,d_j')$ recursively as follows
\begin{equation}\label{e:c'}
c_i'=\left\{
	\begin{array}{ll}
		\frac{c_i}{b_i}, &  i=1\\
		\frac{c_i}{b_i-f_ic_{i-1}'}, & i=2,3,\dots,n-1
	\end{array}
\right.
\end{equation}
and
\begin{equation}\label{e:d'}
d_i'=\left\{
	\begin{array}{ll}
		\frac{d_i}{b_i}, &  i=1\\
		\frac{d_i-f_id_{i-1}'}{b_i-f_ic_{i-1}'}, & i=2,3,\dots,n.
	\end{array}
\right.
\end{equation}
Having defined these coefficients the solution to \eqref{e:inv_mat} can be written as
\begin{equation}\label{e:sol}
\begin{split}
x_n^{(n)} &= d'_n \\
x_i^{(n)} &=d_i'-c_i'x_{i+1}^{(n)}, i=n-1,n-2,\dots,1.
\end{split}
\end{equation}
Since $\I_{n+1}$ only depends on $x_n$, all one needs to do is solve for $d_n'$. In particular,
we can compute directly $\I_n$ for $n\leq 4$ as follows:

\begin{equation}\label{e.i1-4}
\begin{aligned}
\I_1=&\tilde a_{10},\\
\I_2=&-\tilde a_{20}+a_{21}\dfrac{\tilde a_{10}}{a_{11}},\\
\I_3=&-\tilde a_{30}+a_{32}\dfrac{\tilde a_{10}a_{21}-\tilde a_{20}a_{11}}{a_{12}a_{21}+a_{11}a_{22}},\\
\I_4=&-\tilde a_{40}+a_{43}\dfrac{\tilde a_{10}a_{21}a_{32}-a_{11}\tilde a_{20}a_{21}-a_{12}a_{21}\tilde a_{30}-a_{12}a_{21}\tilde a_{30}-a_{11}a_{22}\tilde a_{30}}{a_{12}a_{21}a_{33}+a_{11}a_{22}a_{33}+a_{11}a_{23}a_{32}}.
\end{aligned}
\end{equation}

\begin{rmk}
Since the persistence conditions are $\I_1,\dots, \I_4>0$ we note that they are more likely to be met for the top predator if the growth rate of the prey increases. This agrees with the prediction that the length of a food chain should be an increasing function of the prey growth rate. The fact that the length of a food chain should increase with increasing prey growth rates is a general feature of a multitude of models.
\end{rmk}

\begin{rmk}\label{r:ext}
The invasion rates are functions of the variances $(\sigma_{ii})_{i=1,\dots,n}$. We note that (at least for $j\leq 4$) $\I_j(\sigma_1,\dots,\sigma_j)$ is \textbf{strictly decreasing} in each variable $\sigma_i, 1\leq i\leq 4$. Even though we were not able to give explicit formulas for $\I_n$ one can see from Proposition \ref{p:inv_sol} that for any $1\leq j\leq n$ the quantity
\[
\I_j(\sigma_1,\dots,\sigma_j)
 \]
 is strictly decreasing in the variable $\sigma_u$ for $1\leq u\leq j$ and independent of $\sigma_u$ for $u>j$. As a result environmental stochasticity is seen to increase the risk of extinction.

In the limit of no noise (i.e. $\sigma_{ii}\downarrow 0$ for $1\leq 1\leq 4$) the invasion rates from \eqref{e.i1-4} converge to $\hat \I_i$, that is $\I_i\uparrow \hat \I_i$ as $\sigma_{ii}\downarrow 0$,  where
\begin{equation}\label{e.i1-4_det}
\begin{aligned}
\hat \I_1=& a_{10}>\I_1,\\
\hat \I_2=&- a_{20}+a_{21}\dfrac{ a_{10}}{a_{11}}>\I_2,\\
\hat \I_3=&- a_{30}+a_{32}\dfrac{ a_{10}a_{21}- a_{20}a_{11}}{a_{12}a_{21}+a_{11}a_{22}}>\I_3,\\
\hat \I_4=&- a_{40}+a_{43}\dfrac{ a_{10}a_{21}a_{32}-a_{11} a_{20}a_{21}-a_{12}a_{21} a_{30}-a_{12}a_{21} a_{30}-a_{11}a_{22} a_{30}}{a_{12}a_{21}a_{33}+a_{11}a_{22}a_{33}+a_{11}a_{23}a_{32}}>\I_4.
\end{aligned}
\end{equation}
Since we do not assume $\Gamma$ is positive definite we note that our method also works in the deterministic setting. The expressions for $\hat \I_1,\dots,\hat \I_4$ give, correctly, the deterministic invasion rates.
\end{rmk}

\subsection{Negative invasion rates}\label{s:neg}
For fixed $j\in \{1,\dots,n\}$ write down the system
\begin{equation}\label{ss}
\begin{split}
-a_{11}x_1 - a_{12}x_2 &= -\tilde a_{10}\\
a_{21}x_1 - a_{22}x_2 - a_{23}x_3 &=\tilde a_{20}\\
&\mathrel{\makebox[\widthof{=}]{\vdots}} \\
a_{j-1,j-2}x_{j-2} - a_{j-1,j-1}x_{j-1} -a_{j-1,j}x_{j} &= \tilde a_{j-1,0}\\
a_{j,j-1}x_{j-1} - a_{jj}x_j &= \tilde a_{j0}.
\end{split}
\end{equation}
and suppose it has a strictly positive solution $(x^{(j)}_1,\dots,x^{(j)}_j)$.
The invasion rate of predator $X_{j+1}$ in the habitat of $(X_1,\dots,X_j)$ is given by
\begin{equation}
\I_{j+1} = -\tilde a_{j+1,0} + a_{j+1,j} x^{(j)}_j.
\end{equation}
Now, one can look at the system
\begin{equation}\label{ss2}
\begin{split}
-a_{11}x_1 - a_{12}x_2 &= -\tilde a_{10}\\
a_{21}x_1 - a_{22}x_2 - a_{23}x_3 &=\tilde a_{20}\\
&\mathrel{\makebox[\widthof{=}]{\vdots}} \\
a_{j,j-1}x_{j-1} - a_{j,j}x_{j} -a_{j,j+1}x_{j+1} &= \tilde a_{j,0}\\
a_{j+1,j}x_{j} - a_{j+1,j+1}x_{j+1} &= \tilde a_{j+1,0}.
\end{split}
\end{equation}
and its solution $(x^{(j+1)}_1,\dots,x^{(j+1)}_{j+1})$.
\begin{prop}\label{p:inv_sol}
The following holds
\[
x_{j+1}^{(j+1)}= \frac{\I_{j+1} }{a_{j+1,j+1}+ a_{j+1,j}c_{j}'}
\]
where $c_j'$ is defined in \eqref{e:c'}. In particular, $x_{j+1}^{(j+1)}>0$ if and only if $\I_{j+1}>0$.
\end{prop}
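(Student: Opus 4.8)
The plan is to read off $x_{j+1}^{(j+1)}$ directly from the forward-sweep (Thomas-algorithm) formulas \eqref{e:c'}--\eqref{e:sol}, and then identify the numerator with $\I_{j+1}$. First I note that the system \eqref{ss2} is exactly the system \eqref{ss} with $j$ replaced by $j+1$; it is a $(j+1)\times(j+1)$ tridiagonal linear system with subdiagonal entries $f_i=a_{i,i-1}$, diagonal entries $b_i=-a_{ii}$, superdiagonal entries $c_i=-a_{i,i+1}$, and right-hand side $d_i$ with $d_1=-\tilde a_{10}$ and $d_i=\tilde a_{i0}$ for $i\ge 2$. Applying the back-substitution \eqref{e:sol} to this system, the top of the recursion gives the last coordinate as $x_{j+1}^{(j+1)}=d'_{j+1}$, where $d'_{j+1}$ is produced by the recursion \eqref{e:d'}.

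Next I would expand $d'_{j+1}$ using the second line of \eqref{e:d'} and substitute the concrete values $d_{j+1}=\tilde a_{j+1,0}$, $f_{j+1}=a_{j+1,j}$, $b_{j+1}=-a_{j+1,j+1}$, obtaining
\[
x_{j+1}^{(j+1)}=d'_{j+1}=\frac{\tilde a_{j+1,0}-a_{j+1,j}d'_j}{-a_{j+1,j+1}-a_{j+1,j}c'_j}=\frac{a_{j+1,j}d'_j-\tilde a_{j+1,0}}{a_{j+1,j+1}+a_{j+1,j}c'_j}.
\]
The key step is to recognize that $d'_j=x_j^{(j)}$. This holds because the coefficients $c'_i,d'_i$ for $i\le j$ defined by \eqref{e:c'}--\eqref{e:d'} depend only on the data $b_i,c_i,f_i,d_i$ with $i\le j$, which are identical in the $j$-system \eqref{ss} and in the larger $(j+1)$-system \eqref{ss2}; moreover, applying the back-substitution \eqref{e:sol} to the $j\times j$ system \eqref{ss} gives precisely $x_j^{(j)}=d'_j$ at the top. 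Substituting $d'_j=x_j^{(j)}$ into the numerator and invoking the definition \eqref{e:inv_j+1} of the invasion rate yields $a_{j+1,j}d'_j-\tilde a_{j+1,0}=a_{j+1,j}x_j^{(j)}-\tilde a_{j+1,0}=\I_{j+1}$, which establishes the displayed formula.

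Finally, for the ``in particular'' claim I would verify that the denominator $a_{j+1,j+1}+a_{j+1,j}c'_j$ is strictly positive, so that $x_{j+1}^{(j+1)}$ and $\I_{j+1}$ share the same sign. Since all interaction coefficients $a_{i,i+1},a_{ii},a_{i,i-1}$ are positive, one has $c_i<0$, $b_i<0$, $f_i>0$; an easy induction on $i$ using \eqref{e:c'} then shows $c'_i>0$ for every $i$: the base case $c'_1=a_{12}/a_{11}>0$ is immediate, and if $c'_{i-1}>0$ then $b_i-f_ic'_{i-1}=-a_{ii}-a_{i,i-1}c'_{i-1}<0$, so $c'_i=c_i/(b_i-f_ic'_{i-1})$ is a ratio of two negative numbers, hence positive. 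Consequently $a_{j+1,j+1}+a_{j+1,j}c'_j>0$, and the sign equivalence $x_{j+1}^{(j+1)}>0\iff\I_{j+1}>0$ follows. I expect the main (though modest) obstacle to be the bookkeeping behind the identity $d'_j=x_j^{(j)}$, i.e. making precise that the sweep coefficients for the leading principal block are unchanged when the chain is extended by one predator.
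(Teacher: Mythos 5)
Your proof is correct and takes essentially the same route as the paper's: both read off $x_{j+1}^{(j+1)}=d'_{j+1}$ from the forward-sweep recursion, use the identification $d'_j=x_j^{(j)}$ (valid because the sweep coefficients of the leading block are unchanged when the chain is extended), and recognize the numerator as $\I_{j+1}$. You are in fact slightly more thorough than the paper, which merely asserts $c'_j\geq 0$ ``one can easily see'' where you supply the sign induction showing $c'_i>0$ for all $i$.
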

\begin{proof}
Using \eqref{e:d'}
\begin{equation*}\label{e:d'}
d_i'=\left\{
	\begin{array}{ll}
		\frac{d_i}{b_i}, &  i=1\\
		\frac{d_i-f_id_{i-1}'}{b_i-f_ic_{i-1}'}, & i=2,3,\dots,j+1.
	\end{array}
\right.
\end{equation*}
and noting that $d_{j+1}' = x_{j+1}^{(j+1)}$, $d_{j}' = x_{j}^{(j)}$ we get
\[
x_{j+1}^{({j+1})}= \frac{d_{j+1} - f_{j+1} x_{j}^{(j)}}{b_{j+1}-f_{j+1}c_{j}'} =  \frac{-\tilde {a}_{j+1,0} + a_{j+1,j} x_{j}^{(j)}}{a_{j+1,j+1}+ a_{j+1,j}c_{j}'} .
\]
This, together with the expression
\[
\I_{j+1} = -\tilde a_{j+1,0} + a_{j+1,j} x^{(j)}_j.
\]
implies that
\[
x_{j+1}^{(j+1)}= \frac{\I_{j+1}}{a_{j+1,j+1}+ a_{j+1,j}c_{j}'}
\]
Using \eqref{e:c'}  one can easily see that $c'_{j}\geq 0$.
\end{proof}

\begin{prop}\label{p:inv_sol_2}
If there exists $j^*\geq 1$ such that $\I_{j^*+1}<0$ then there exists no solution in $\R_+^{m,\circ}$ for the system \eqref{ss} with $j=m\in\{j^*+1, \dots,n\}$.
\end{prop}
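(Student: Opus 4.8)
The plan is to argue by contradiction, tracking the sign of a single, carefully chosen coordinate of the (unique) solution of the size-$m$ system \eqref{ss}. The essential tool is the back-substitution formula \eqref{e:sol} together with the observation, already used implicitly in the proof of Proposition \ref{p:inv_sol}, that the forward-sweep coefficients $c_i'$ and $d_i'$ of \eqref{e:c'}--\eqref{e:d'} depend only on indices $\le i$ and therefore take the \emph{same} numerical values in every system \eqref{ss} of size $\ge i$.

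First I would record the two facts I need about these coefficients. Setting $\beta_i := b_i - f_i c_{i-1}' = -a_{ii} - a_{i,i-1}c_{i-1}'$ (with $c_0':=0$), each $\beta_i<0$ since all $a$'s are positive and $c_{i-1}'\ge 0$; hence the tridiagonal matrix is invertible, so \eqref{ss} has a unique solution and the sweep is legitimate, and moreover $c_i'\ge 0$ as noted at the end of the proof of Proposition \ref{p:inv_sol}. Second, by the first line of \eqref{e:sol} the top coordinate of the size-$k$ solution is $d_k'$, that is $d_k'=x_k^{(k)}$; combined with Proposition \ref{p:inv_sol} this shows that $d_k'$ and $\I_k$ have the same sign, and in particular $d_{j^*+1}'=x_{j^*+1}^{(j^*+1)}<0$ because $\I_{j^*+1}<0$.

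Now fix $m\in\{j^*+1,\dots,n\}$ and suppose, for contradiction, that \eqref{ss} with $j=m$ has a solution $(x_1^{(m)},\dots,x_m^{(m)})\in\R_+^{m,\circ}$. I would isolate the $(j^*+1)$-th coordinate. If $m=j^*+1$, back-substitution gives directly $x_{j^*+1}^{(m)}=d_m'=d_{j^*+1}'<0$. If $m>j^*+1$, then \eqref{e:sol} applied at index $i=j^*+1\le m-1$ gives $x_{j^*+1}^{(m)}=d_{j^*+1}'-c_{j^*+1}'x_{j^*+2}^{(m)}$; since $c_{j^*+1}'\ge 0$ and $x_{j^*+2}^{(m)}>0$ by assumption, this forces $x_{j^*+1}^{(m)}\le d_{j^*+1}'<0$. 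In either case the $(j^*+1)$-th coordinate is strictly negative, contradicting strict positivity of the solution, and the claim follows.

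The one genuinely delicate point — the step I would write out most carefully — is the size-independence of the sweep coefficients: one must be certain that the number $d_{j^*+1}'$ arising in the forward sweep of the size-$m$ system is literally the same as the top density $x_{j^*+1}^{(j^*+1)}$ of the smaller system, so that the sign information from Proposition \ref{p:inv_sol} transfers. This is immediate from the fact that the recursions \eqref{e:c'}--\eqref{e:d'} never reference the ambient dimension, but it is the hinge of the whole argument; everything else is sign bookkeeping using $c_i'\ge 0$ and the monotone subtractive form of the back-substitution.
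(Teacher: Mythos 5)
Your proof is correct, and it takes a genuinely (if mildly) different route from the paper's. The paper iterates Proposition \ref{p:inv_sol} along the chain of invasion rates: from $\I_{j^*+1}<0$ it deduces $x_{j^*+1}^{(j^*+1)}<0$, hence $\I_{j^*+2}=-\tilde a_{j^*+2,0}+a_{j^*+2,j^*+1}x_{j^*+1}^{(j^*+1)}<0$, hence $x_{j^*+2}^{(j^*+2)}<0$, and so on up to $x_m^{(m)}<0$; since the size-$m$ system has a unique solution and its top coordinate is negative, no solution lies in $\R_+^{m,\circ}$. You instead fix $m$, assume a strictly positive solution, and use the back-substitution identity $x_{j^*+1}^{(m)}=d_{j^*+1}'-c_{j^*+1}'x_{j^*+2}^{(m)}$ from \eqref{e:sol}, together with $c_{j^*+1}'\ge 0$, the assumed positivity of $x_{j^*+2}^{(m)}$, and the size-independence of the forward sweep (so that $d_{j^*+1}'=x_{j^*+1}^{(j^*+1)}<0$ by Proposition \ref{p:inv_sol}), to force the $(j^*+1)$-th coordinate to be negative. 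Both arguments rest on the same two ingredients --- Proposition \ref{p:inv_sol} and nonnegativity of the $c_i'$ --- but the paper's version propagates negativity unconditionally to the top coordinate of every intermediate subsystem (a fact it reuses in Lemma \ref{lm3.2}), whereas yours is a one-step contradiction that pins down exactly which coordinate fails and bypasses the intermediate invasion rates $\I_{j^*+2},\dots,\I_m$. Your insistence on making explicit the size-independence of the sweep coefficients is well placed: the paper relies on the same fact silently when it expresses $x_{j+1}^{(j+1)}$ in terms of $x_j^{(j)}$ and $c_j'$ in the proof of Proposition \ref{p:inv_sol}.
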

\begin{proof}
By Proposition \ref{p:inv_sol} we note that
\[
x_{j^*+1}^{(j^*+1)}= \frac{\I_{j^*+1} }{a_{j^*+1,j^*+1}+ a_{j^*+1,j}c_{j^*}'} < 0
\]
As a result
\[
\I_{j^*+2} = -\tilde a_{j^*+2,0} + a_{j^*+2,j^*+1} x^{(j^*+1)}_{j^*+1}<0
\]
and by Proposition \ref{p:inv_sol} $x_{j^*+2}^{(j^*+2)}<0$. By repeating this argument we see that $x_m^{(m)}<0$

\end{proof}

\subsection{Case study: equal death, competition and predation rates }
Consider a simplified setting where $a_{ii}=\alpha, i=1,\dots,n$, $a_{i,i-1}=\beta, i=2,\dots,n$, $a_{i,i+1}=\beta, i=1,\dots,n-1$ and $\tilde a_{10}= \delta, \tilde a_{i0}=\gamma, i=2,\dots,n$. In this case we want to solve

\begin{equation}\label{e:inv_mat_2}
A \bx = \ba
\end{equation}
where $\bx = \left(x_1,\dots,x_n\right)^T$, $\ba = (-\delta,\gamma,\gamma,\dots, \gamma)^T$ and
\[
A= \begin{bmatrix}
    -\alpha & -\beta & 0 &\dots  & 0 &0 \\
    \beta & -\alpha & -\beta&\dots  & 0 &0 \\
    0 & \beta & -\alpha&\dots  & 0 &0 \\
    \vdots & \vdots & \vdots & \ddots & \vdots & \vdots\\
    0 & 0 & 0 & \dots  &-\alpha&-\beta\\
     0 & 0 & 0 & \dots  &\beta&-\alpha.
\end{bmatrix}
\]

We use the technique of \cite{D08}[Section 3.1] to find the inverse $A^{-1}$ of $A$. The quadratic equation
\[
\beta - r\alpha - r^2\beta = 0
\]
hast the distinct roots
\[
r_{1,2} = \frac{1}{-2\beta} \left(\alpha \pm\sqrt{\alpha^2+4\beta^2}\right).
\]
One can write the $n$th row of the inverse matrix $A^{-1}$ as
\[
A_{nj}^{-1} = \frac{(r_1^{-j}-r_2^{-j})(r_1^{n+1}r_2^n-r_2^{n+1}r_1^n)}{-\beta(r_1-r_2)(r_1^{n+1}-r_2^{n+1})}, 1\leq j\leq n.
\]
Therefore the solution to \eqref{e:inv_mat_2} satisfies
\[
\begin{split}
x_n &= A_{n1}^{-1}(-\delta) + \sum_{j=2}^n A_{nj}^{-1} \gamma\\
&= -\delta \frac{(r_1^{-1}-r_2^{-1})(r_1^{n+1}r_2^n-r_2^{n+1}r_1^n)}{-\beta(r_1-r_2)(r_1^{n+1}-r_2^{n+1})} + \gamma \frac{(r_1^{n+1}r_2^n-r_2^{n+1}r_1^n)}{-\beta(r_1-r_2)(r_1^{n+1}-r_2^{n+1})}\sum_{j=2}^n (r_1^{-j}-r_2^{-j})\\
&=  -\delta \frac{(r_1^{-1}-r_2^{-1})(r_1^{n+1}r_2^n-r_2^{n+1}r_1^n)}{-\beta(r_1-r_2)(r_1^{n+1}-r_2^{n+1})} + \gamma \frac{(r_1^{n+1}r_2^n-r_2^{n+1}r_1^n)}{-\beta(r_1-r_2)(r_1^{n+1}-r_2^{n+1})} \left(r_1\frac{1-r_1^{n-1}}{1-r_1}-r_2\frac{1-r_2^{n-1}}{1-r_2}\right).
\end{split}
\]
In this case one can write down explicitly the formula for the invasion rate.

\begin{equation*}
\begin{split}
\I_{n+1} =& -\tilde a_{n+1,0} + a_{n+1,n} x^{(n)}_n\\
=& -\gamma + \beta \Bigg[-\delta \frac{(r_1^{-1}-r_2^{-1})(r_1^{n+1}r_2^n-r_2^{n+1}r_1^n)}{-\beta(r_1-r_2)(r_1^{n+1}-r_2^{n+1})} \\
&+ \gamma \frac{(r_1^{n+1}r_2^n-r_2^{n+1}r_1^n)}{-\beta(r_1-r_2)(r_1^{n+1}-r_2^{n+1})} \left(r_1\frac{1-r_1^{n-1}}{1-r_1}-r_2\frac{1-r_2^{n-1}}{1-r_2}\right)\Bigg].
\end{split}
\end{equation*}
\subsection{Trophic cascades in a plant--herbivore--predator system}\label{s:cascade}

Let us explore a food chain with two or three species. We will assume $X_1$ is a plant, $X_2$ is a herbivore eating the plant, and $X_3$ is a predator that preys on the herbivore.

We can compute the expected abundances of different species at stationarity using the linear system \eqref{ss}. We do this to glean information regarding how these expected abundances are changed by intraspecific competition and environmental stochasticity. According to our notation from Section \ref{s:neg} the quantity $x_i^{(j)}$ will denote the abundance of species $i$ at stationarity when there are $j$ species present. In the example with three species we are looking at, $x_1^{(3)}$ will be the abundance at stationarity of the plant when we have the plant, the herbivore, and the predator present. In contrast, $x_1^{(2)}$ is the abundance of the plant at stationarity when we only have the plant and the herbivore present.

Solving the $3\times 3$ or $2\times 2$ system \eqref{ss} directly yields:

$$
x_{3}^{(3)}=\dfrac{\tilde a_{10}a_{32}a_{21} -\tilde a_{30}a_{22}a_{11}-\tilde a_{30}a_{21}a_{12}-\tilde a_{20}a_{32}a_{11}}{a_{33}a_{22}a_{11}+a_{33}a_{21}a_{12}+a_{32}a_{12}a_{11}}
$$
$$
x_{2}^{(3)}=\dfrac{-\tilde a_{20}a_{11}a_{33}+\tilde a_{30}a_{11}a_{12}+\tilde a_{10}a_{21}a_{33}}{a_{33}a_{22}a_{11}+a_{33}a_{21}a_{12}+a_{32}a_{12}a_{11}}
$$
$$x_{1}^{(3)}=\dfrac{\tilde a_{10}a_{33}a_{22}+\tilde a_{10}a_{32}a_{12}+\tilde a_{20}a_{12}a_{33}-\tilde a_{30}a^2_{12}}{a_{33}a_{22}a_{11}+a_{33}a_{21}a_{12}+a_{32}a_{12}a_{11}}$$
$$x_{2}^{(2)}=\dfrac{\tilde a_{10}a_{21}-\tilde a_{20}a_{11}}{a_{22}a_{11}+a_{21}a_{12}}$$
$$x_{1}^{(2)}=\dfrac{\tilde a_{10}a_{22}+\tilde a_{20}a_{12}}{a_{22}a_{11}+a_{21}a_{12}}$$

$$ x_1^{(1)}= \frac{\tilde a_{10}}{a_{11}}.$$

Let us explore the effect the introduction of a predator $X_3$ has on the expected density of the plant and the herbivore at stationarity. Using the formulas above one can show that, as long as the predator $X_3$ persists, i.e. $x_3^{(3)}>0$, we will always have $x_1^{(3)}-x_1^{(2)}>0$. If the predator $X_3$ goes extinct, i.e. $x_3^{(3)}=0$, then $x_1^{(3)}-x_1^{(2)}=0$. Similarly, one can show that if $x_3^{(3)}>0$ then $x_2^{(3)}-x_2^{(2)}<0$ and if $x_3^{(3)}=0$ then $x_2^{(3)}-x_2^{(2)}=0$. One can also note that the abundance of the plant species is decreasing as we increase the death rate of the predator.

In order to get more information, we graph $x_1^{(3)}-x_1^{(2)}$ and $x_2^{(3)}-x_2^{(2)}$ as functions of the predation rate of the predator on the herbivore,$a_{32}$, and the intracompetition rate of the predator, $a_{33}$. See Figures \ref{fig:4} and \ref{fig:6}. Similarly, in Figure \ref{fig:5} (respectively Figure \ref{fig:7}) we graph $x_1^{(3)}-x_1^{(2)}$ (respectively $x_1^{(3)}-x_1^{(2)}$) as a function of predation rate $a_{32}$, the death rate of the predator $\tilde a_{33}$ or the intracompetition rate of the predator $a_{33}$. For Figures \ref{fig:1}, \ref{fig:4}, \ref{fig:5}, \ref{fig:6} and \ref{fig:7} we have set $\tilde a_{10}=4$ and all the other coefficients (other than the ones being varied) equal to $1$. For Figure \ref{fig:3} we have set all constant coefficients equal to $1$.

We note that the introduction of the predator is always beneficial to the plant and detrimental to the herbivore. The predator will decrease the population size of the herbivore, which will lead to an increase in the plant population size. The density of the plant is seen to increase as we increase the predation rate $a_{32}$ of the predator on the herbivore, and as we decrease the intraspecific competition rate, $a_{33}$, among predators. Plant density will also increase if the stochastic death rate $\tilde a_{30}$ of the predator decreases. The areas of the graphs from Figures \ref{fig:4} and \ref{fig:5} where $x_1^{(3)}-x_1^{(2)}=0$ (or from Figures \ref{fig:6} and \ref{fig:7} where $x_2^{(3)}-x_2^{(2)}=0$) are those where the predator $X_3$ goes extinct. It turns out that anything that is detrimental to the predator (higher intracompetition rate or higher death rate), is also detrimental to the plant. Similarly, factors that are helping the predator survive (higher predation rate $a_{32}$) increase the density of the plant. If one looks at the herbivore $X_2$ then its abundance at stationarity will always suffer by the introduction of the predator.

\begin{figure}
\begin{center}
\includegraphics[scale=0.6 ]{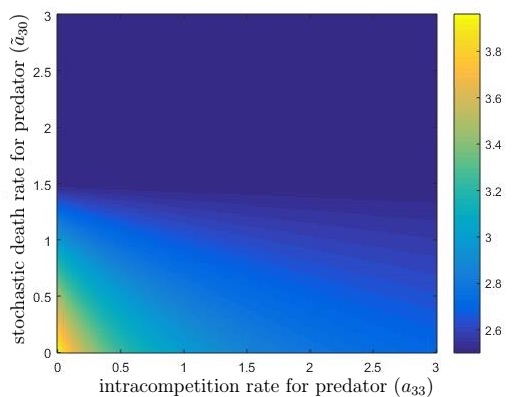}
\caption{Expected density of plant $X_1$ at stationarity as a function of the intraspecific competition $a_{33}$ and stochastic death rate $\tilde a_{30}$ of the predator.}
\label{fig:1}
\end{center}
\end{figure}

\begin{figure}
\begin{center}
\includegraphics[scale=0.6]{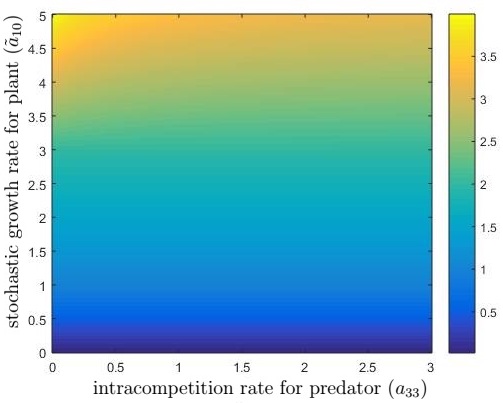}
\caption{Expected density of plant $X_1$ at stationarity as a function of its stochastic growth rate $\tilde a_{10}$ and the intraspecific competition $a_{33}$ of the predator.}
\label{fig:3}

\end{center}
\end{figure}

\begin{figure}
\begin{center}
\includegraphics[scale=0.6]{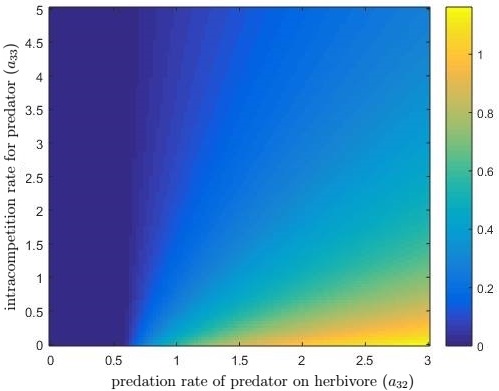}
\caption{Difference between expected densities of plant species $x_1^{(3)}-x_1^{(2)}$ with or without a predator.}
\label{fig:4}

\end{center}
\end{figure}

\begin{figure}
\begin{center}
\includegraphics[scale=0.6]{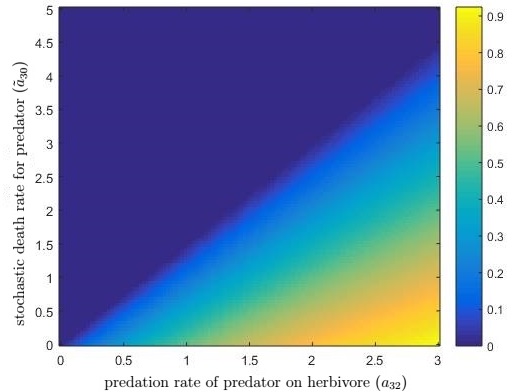}
\caption{Difference between expected densities of plant species $x_1^{(3)}-x_1^{(2)}$ with or without a predator.}
\label{fig:5}

\end{center}
\end{figure}

\begin{figure}
\begin{center}
\includegraphics[scale=0.6]{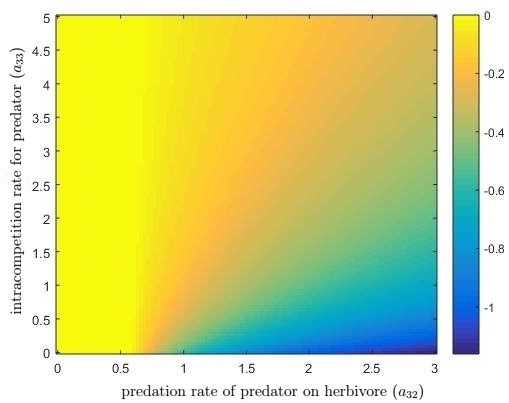}
\caption{Difference between expected densities of herbivore species $x_2^{(3)}-x_2^{(2)}$ with or without a predator.}
\label{fig:6}

\end{center}
\end{figure}

\begin{figure}
\begin{center}
\includegraphics[scale=0.6]{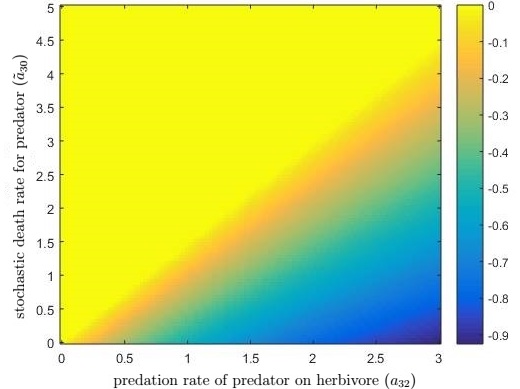}
\caption{Difference between expected densities of herbivore species $x_2^{(3)}-x_2^{(2)}$ with or without a predator.}
\label{fig:7}

\end{center}
\end{figure}

\section{Discussion}\label{s:disc}
Even though environmental stochasticity is often said to be a key factor in the study of the persistence of species, its effect on persistence has not been investigated until recently. \cite{BHS08} showed that if one adds a small diffusion term to a persistent deterministic system then the corresponding differential equation has a positive stationary distribution concentrated on the positive global attractor of the deterministic system. For many systems the random perturbations might not be small. For populations living in a compact state space \cite{SBA11} give sufficient conditions for persistence that extend the results from deterministic systems to randomly forced nonlinear systems. This has been further extended by \cite{HN16} to non-compact states spaces. \cite{HN16} are able to give, under some mild assumptions, sufficient and necessary conditions for persistence and extinction for stochastic Kolmogorov systems of the form
\[
d\BX(t) = \BX(t)\mathbf{f}(\BX(t))\,dt +  \BX(t)\mathbf{g}(\BX(t))\,d\mathbf{E}(t).
\]

Most results which give sharp, tractable conditions for persistence and extinction of populations in stochastic environments usually treat models with only two species (see \cite{EHS15, R03}). However, it has been shown that it makes more sense to look at food chains having more than two species (\cite{HP91, KH94, P88}). We note that although our model formulation ignores many important ecological features, it still yields some interesting conclusions. These mathematical conclusions should be further studied as hypthesis requiring confirmation. Our results are a first step towards an analysis of stochastic food chains and food webs with an arbitrary number of species.

In the current paper we use the newly developed methods from \cite{HN16} to analyse the persistence and extinction of species that are part of a stochastic Lotka-Volterra food chain. We assume that species can only interact with those other species which are adjacent to them in the food chain and that there is strictly positive intraspecies competition for all the species. Our main interest was to lift the results from the deterministic setting to the stochastic one and to see whether stochasticity inhibits or enhances coexistence. By studying the invasion rates of the predators $(\I_2,\dots,\I_n)$ we show that one can determine which species persist and which go extinct exponentially fast. Furthermore, we provide in Section \ref{s:inv} an algorithm for computing the invasion rates. In this way, based on the interaction coefficients of the system, one can find sufficient and (almost) necessary conditions for persistence/extinction.
We show that the introduction of a new top predator into the ecosystem makes extinction more likely. This agrees with the deterministic case studied in \cite{GH79}. Furthermore, we also note that in our setting stochasticity makes extinction more likely.  However, since the invasion rates depend continuously on the covariance matrix $\Sigma$ of the environmental noise one can see that if the random perturbations are small and the associated deterministic system is persistent then the stochastic system is also persistent. Actually, our results (see Remark \ref{r:ext}) show that stochasticity acts in a bottom-up way: the variance $\sigma_{11}$ affects species $\{1,\dots,n\}$ and the variance $\sigma_{jj}, j=2,\dots,n$ affects the species $\{j,j+1,\dots,n\}$. As such, in our model the environmental stochasticity of a trophic level only affects the persistence and extinction of species at higher trophic levels.
We note that environmental stochasticity does not always make extinction more likely. For example in \cite{BL16} the authors show that in certain cases, the extinction of species in a deterministic setting can be reversed into coexistence by adding randomness to the system. As such,we think that the rigorous study of the stochastic system we propose, did provide insightful information.

It is noteworthy that we do not only get robust results for extinction or persistence -- we also get that the convergence to the stationary distribution in the case of persistence is exponentially fast and an exact expression for the convergence rate to $0$ in the case of extinction. These rates are very helpful when one wants to run numerical methods and simulations.

We have fully analysed what happens in chains of length $n\leq 4$. Intraspecific competition is shown to change both the conditions for persistence and the strength of trophic cascades.

 Humans have always tried to exterminate predators: slayers of predators were seen as heroes in most mythologies; culls have been used to control seals and sea lions in order to manage fisheries; predator control agents are often hired to kill predators (wolves, coyotes, etc). We have effectively decimated and in some cases even driven to extinction entire species of predators. The effects of these exterminations are now becoming more and more clear. There are a plethora of reasons why predators are important in food webs. Predators are usually at the top of the food chain and thus can regulate the trophic levels below them. Removing predators often destabilizes the food chain, and sets off reactions that can cascade down to the lowest trophic level. In Section \ref{s:cascade} we looked in depth at a plant--herbivore--predator food chain. What our computations and figures show is the following: Aything that helps the predator (decreased death rate, higher predation rate) will be detrimental to the herbivore and favorable to the plant.

Our model also leads to the observation that food chain length should increase when we increase the stochastic growth rate $\tilde a_{10}$ of the plant at the first trophic level.

The only cases that cannot be treated are those for which one of the invasion rates is zero, that is $\I_k=0$ for some $k\in\{2,\dots,n\}$. This is where our methods break down. As mentioned in \cite{GH79} even in the deterministic setting, when $\kappa(n)=0$ (which would imply one of the invasion rates is $0$) the problem becomes more complicated: one can find solutions with positive initial conditions which persist when $n=3$ while when $n=4$ there are solutions which are not persistent. In the stochastic case, when $n=1$ the prey is described by the SDE
\[
d X_1(t) =  X_1(t)(a_{10} - a_{11} X_1(t) )\,dt +  X_1(t)\,dE_1(t).
\]
If $\I_1=\tilde a_{10}=0$ then one can show that $X_1$ is null recurrent and
\begin{equation}\label{e:X1}
\lim_{t\to \infty} \frac{1}{t}\int_0^t X_1(s)\,ds =0.
\end{equation}
As a result the prey $X_1$ is not strongly stochastically persistent (there is no invariant probability measure on $(0,\infty)$) but it also does not go extinct. It only goes extinct in the weak sense given by \eqref{e:X1}. We expect similar phenomena to occur in higher dimensions if one of the invasion rates is zero.
 One possible approach would be to try and adapt the methods used by \cite{B91} where the author studies SDE where the extinction set is $\{0\}$. \cite{B91} is able to show that if the leading Lyapunov exponent is zero then the process is null-recurrent. In the setting of \cite{B91} one only has to study the dirac measure at $0$, something which simplifies the problem significantly.

Our results generalize the results from the deterministic setting of \cite{GH79} to their natural stochastic analogues. We are able to find an algebraically tractable criterion (just like in the deterministic setting) for persistence and extinction.

The invasion rates are shown to be closely related to the first moments of the invariant measures living on the boundary $\partial\R_+^n$ of the system. This result is the analogue of looking for the different equilibrium points of the deterministic system \eqref{e:det} and then studying the stability of these points.

The main simplification of our model is the fact that the dynamics of each trophic level is governed by the adjoining trophic levels which immediately precede or succeed it. This fact makes it possible to explicitly describe the structure of the ergodic invariant probability measures of the system living on the boundary $\partial\R_+^n$ (Lemma \ref{l:inv}). The key property of an invariant probability measure $\mu$ living on $\partial\R_+^n$ is that if predator $X_j$ is not present then all predators that are above $j$ (that is, $X_i$ with $i>j$) are also not present. This fact is biologically clear because if species $X_j$ does not exist then $X_{j+1}$ must go extinct since it does not have a food source.

For more complex interactions between predators and their prey (i.e. a food web instead of a food chain), even when $n=3$, the possible outcomes become much more complicated. We refer the reader to \cite{HN16} for a detailed discussion of the case when one has one prey and two predators and the apex predator eats both the intermediate predator and the prey.

In ecology there has been an increased interest in the \textit{spatial synchrony} that appears in population dynamics. This refers to the changes in the time-dependent characteristics (i.e. abundances etc) of structured populations. One of the mechanisms which creates synchrony is the dependence of the population dynamics on a synchronous random environmental factor such as temperature or rainfall. The synchronizing effect of environmental stochasticity, or the so-called \textit{Moran effect}, has been observed in multiple population models. Usually this effect is the result of random but correlated weather effects acting on populations. For many biotic and abiotic factors, like population density, temperature or growth rate, values at close locations are usually similar. We refer the reader interested in an in-depth analysis of spatial synchrony to \cite{K00, LKB04}.
Most stochastic differential equations models appearing in the population dynamics literature treat only the case when the noise is non-degenerate (although see \cite{R03, DNDY16}). Although this approach significantly simplifies the technical proofs, from a biological point of view it is not clear that the noise should not be degenerate. For example, if one models a system with multiple populations then all populations can be influenced by the same factors (a disease, changes in temperature and sunlight etc). Environmental factors can intrinsically create spatial correlations and as such it makes sense to study how these degenerate systems compare to the non-degenerate ones. In our setting the noise of the different species could be strongly correlated. Actually, in some cases it could be more realistic to have the same one-dimensional Brownian motion $(B_t)_{t\geq 0}$ driving the dynamics of all the interacting species. Therefore, we chose to present a full analysis of the degenerate setting.

{\bf Acknowledgments.}  We thank an anonymous referee for comments which helped improve this manuscript and  Sebastian Schreiber for helpful discussions and suggestions.

\bibliographystyle{amsalpha}
\bibliography{LV}

\newcommand{\etalchar}[1]{$^{#1}$}
\providecommand{\bysame}{\leavevmode\hbox to3em{\hrulefill}\thinspace}
\providecommand{\MR}{\relax\ifhmode\unskip\space\fi MR }
\providecommand{\MRhref}[2]{%
  \href{http://www.ams.org/mathscinet-getitem?mr=#1}{#2}
}
\providecommand{\href}[2]{#2}
\begin{thebibliography}{VZSLR99}

\bibitem[Bax91]{B91}
P.~H. Baxendale, \emph{Invariant measures for nonlinear stochastic differential
  equations}, Lyapunov exponents ({O}berwolfach, 1990), Lecture Notes in Math.,
  vol. 1486, Springer, Berlin, 1991, pp.~123--140. \MR{1178952}

\bibitem[BEM07]{BEM07}
J.~Blath, A.~Etheridge, and M.~Meredith, \emph{Coexistence in locally regulated
  competing populations and survival of branching annihilating random walk},
  Ann. Appl. Probab. \textbf{17} (2007), no.~5-6, 1474--1507. \MR{2358631}

\bibitem[Ben16]{B14}
M.~Bena{\"\i}m, \emph{Stochastic persistence}, preprint.

\bibitem[BHS08]{BHS08}
M.~Bena{\"\i}m, J.~Hofbauer, and W.~H. Sandholm, \emph{Robust permanence and
  impermanence for stochastic replicator dynamics}, J. Biol. Dyn. \textbf{2}
  (2008), no.~2, 180--195. \MR{2427526}

\bibitem[BL16]{BL16}
M.~Bena{\"\i}m and C.~Lobry, \emph{Lotka {V}olterra in fluctuating environment
  or ``how switching between beneficial environments can make survival
  harder''}, Ann. Appl. Probab. (2016).

\bibitem[Bra02]{B02}
C.~A. Braumann, \emph{Variable effort harvesting models in random environments:
  generalization to density-dependent noise intensities}, Math. Biosci.
  \textbf{177/178} (2002), 229--245, Deterministic and stochastic modeling of
  biointeraction (West Lafayette, IN, 2000). \MR{1923811}

\bibitem[BS09]{BS09}
M.~Bena{\"\i}m and S.~J. Schreiber, \emph{Persistence of structured populations
  in random environments}, Theoretical Population Biology \textbf{76} (2009),
  no.~1, 19--34.

\bibitem[CCL{\etalchar{+}}09]{CCA09}
P.~Cattiaux, P.~Collet, A.~Lambert, S.~Mart{\'\i}nez, S.~M{\'e}l{\'e}ard, and
  J.~San Mart{\'\i}n, \emph{Quasi-stationary distributions and diffusion models
  in population dynamics}, Ann. Probab. \textbf{37} (2009), no.~5, 1926--1969.
  \MR{2561437}

\bibitem[CE89]{CE89}
P.~L. Chesson and S.~Ellner, \emph{Invasibility and stochastic boundedness in
  monotonic competition models}, Journal of Mathematical Biology \textbf{27}
  (1989), no.~2, 117--138.

\bibitem[Che00]{C00}
P.~Chesson, \emph{General theory of competitive coexistence in
  spatially-varying environments}, Theoretical Population Biology \textbf{58}
  (2000), no.~3, 211--237.

\bibitem[CM10]{CM10}
P.~Cattiaux and S.~M{\'e}l{\'e}ard, \emph{Competitive or weak cooperative
  stochastic {L}otka--{V}olterra systems conditioned on non-extinction}, J.
  Math. Biol. \textbf{60} (2010), no.~6, 797--829. \MR{2606515}

\bibitem[DNDY16]{DNDY16}
N.~T. Dieu, D.~H. Nguyen, N.~H. Du, and G.~Yin, \emph{Classification of
  asymptotic behavior in a stochastic {SIR} model}, SIAM J. Appl. Dyn. Syst.
  \textbf{15} (2016), no.~2, 1062--1084. \MR{3505304}

\bibitem[Dow08]{D08}
M.~Dow, \emph{Explicit inverses of {T}oeplitz and associated matrices}, ANZIAM
  Journal \textbf{44} (2008), 185--215.

\bibitem[DS06]{DS06}
N.~H. Du and V.~H. Sam, \emph{Dynamics of a stochastic {L}otka-{V}olterra model
  perturbed by white noise}, J. Math. Anal. Appl. \textbf{324} (2006), no.~1,
  82--97. \MR{2262458}

\bibitem[EHS15]{EHS15}
S.~N. Evans, A.~Hening, and S.~J. Schreiber, \emph{Protected polymorphisms and
  evolutionary stability of patch-selection strategies in stochastic
  environments}, J. Math. Biol. \textbf{71} (2015), no.~2, 325--359.
  \MR{3367678}

\bibitem[ERSS13]{ERSS13}
S.~N. Evans, P.~L. Ralph, S.~J. Schreiber, and A.~Sen, \emph{Stochastic
  population growth in spatially heterogeneous environments}, J. Math. Biol.
  \textbf{66} (2013), no.~3, 423--476. \MR{3010201}

\bibitem[FS85]{FS85}
H.~I. Freedman and J.~W.~H. So, \emph{Global stability and persistence of
  simple food chains}, Math. Biosci. \textbf{76} (1985), no.~1, 69--86.
  \MR{809991}

\bibitem[Gar80]{G80}
T.~C. Gard, \emph{Persistence in food chains with general interactions}, Math.
  Biosci. \textbf{51} (1980), no.~1-2, 165--174. \MR{605583}

\bibitem[Gar84]{G84}
\bysame, \emph{Persistence in stochastic food web models}, Bull. Math. Biol.
  \textbf{46} (1984), no.~3, 357--370. \MR{748544}

\bibitem[Gar88]{G88}
\bysame, \emph{Introduction to stochastic differential equations}, M. Dekker,
  1988.

\bibitem[GH79]{GH79}
T.~C. Gard and T.~G. Hallam, \emph{Persistence in food webs. {I}.
  {L}otka-{V}olterra food chains}, Bull. Math. Biol. \textbf{41} (1979), no.~6,
  877--891. \MR{640001}

\bibitem[Han92]{H92}
L.-A. Hansson, \emph{The role of food chain composition and nutrient
  availability in shaping algal biomass development}, Ecology \textbf{73}
  (1992), no.~1, 241--247.

\bibitem[Har79]{H79}
G.~W. Harrison, \emph{Global stability of food chains}, The American Naturalist
  \textbf{114} (1979), no.~3, 455--457.

\bibitem[HN18a]{HN16}
A.~Hening and D.~Nguyen, \emph{Coexistence and extinction for stochastic
  {K}olmogorov systems}, Annals of Applied Probability (2018), to appear.

\bibitem[HN18b]{HN17}
\bysame, \emph{Stochastic {L}otka-{V}olterra food chains}, Journal of
  Mathematical Biology (2018), to appear.

\bibitem[HNY18]{HNY16}
A.~Hening, D.~Nguyen, and G.~Yin, \emph{Stochastic population growth in
  spatially heterogeneous environments: The density-dependent case}, Journal of
  Mathematical Biology \textbf{76} (2018), no.~3, 697--754.

\bibitem[Hof81]{H81}
J.~Hofbauer, \emph{A general cooperation theorem for hypercycles}, Monatshefte
  f{\"u}r Mathematik \textbf{91} (1981), no.~3, 233--240.

\bibitem[HP91]{HP91}
A.~Hastings and T.~Powell, \emph{Chaos in a three-species food chain}, Ecology
  \textbf{72} (1991), no.~3, 896--903.

\bibitem[HS89]{HJ89}
J.~Hofbauer and J.~W-H So, \emph{Uniform persistence and repellors for maps},
  Proceedings of the American Mathematical Society \textbf{107} (1989), no.~4,
  1137--1142.

\bibitem[HS17]{HS17}
A.~Hening and E.~Strickler, \emph{On a predator-prey system with random
  switching that never converges to its equilibrium}, preprint.

\bibitem[Hut84]{H84}
V.~Hutson, \emph{A theorem on average {L}iapunov functions}, Monatshefte
  f{\"u}r Mathematik \textbf{98} (1984), no.~4, 267--275.

\bibitem[KBB{\etalchar{+}}00]{K00}
B.~E. Kendall, O.~N. Bj{\o}rnstad, J.~Bascompte, T.~H. Keitt, and W.~F. Fagan,
  \emph{Dispersal, environmental correlation, and spatial synchrony in
  population dynamics}, The American Naturalist \textbf{155} (2000), no.~5,
  628--636.

\bibitem[KH94]{KH94}
A.~Klebanoff and A.~Hastings, \emph{Chaos in three species food chains}, J.
  Math. Biol. \textbf{32} (1994), no.~5, 427--451. \MR{1284166}

\bibitem[LB16]{LB16}
M.~Liu and C.~Bai, \emph{Analysis of a stochastic tri-trophic food-chain model
  with harvesting}, J. Math. Biol. \textbf{73} (2016), no.~3, 597--625.
  \MR{3535415}

\bibitem[LES03]{LES03}
R.~Lande, S.~Engen, and B.-E. Saether, \emph{Stochastic population dynamics in
  ecology and conservation}, Oxford University Press on Demand, 2003.

\bibitem[LKB04]{LKB04}
A.~Liebhold, W.~D. Koenig, and O.~N. Bj{\o}rnstad, \emph{Spatial synchrony in
  population dynamics}, Annual Review of Ecology, Evolution, and Systematics
  (2004), 467--490.

\bibitem[Mal01]{M01}
R.~K. Mallik, \emph{The inverse of a tridiagonal matrix}, Linear Algebra Appl.
  \textbf{325} (2001), no.~1-3. \MR{1810099}

\bibitem[MdR12]{MR12}
J.~C. Moore and P.~C. de~Ruiter, \emph{Energetic food webs: an analysis of real
  and model ecosystems}, OUP Oxford, 2012.

\bibitem[OB71]{O71}
E.~P. Odum and G.~W. Barrett, \emph{Fundamentals of ecology}, vol.~3, Saunders
  Philadelphia, 1971.

\bibitem[OPO95]{OPO95}
T.~Oksanen, M.~E. Power, and L.~Oksanen, \emph{Ideal free habitat selection and
  consumer-resource dynamics}, The American Naturalist \textbf{146} (1995),
  no.~4, 565--585.

\bibitem[Pai88]{P88}
R.~T. Paine, \emph{Road maps of interactions or grist for theoretical
  development?}, Ecology \textbf{69} (1988), no.~6, 1648--1654.

\bibitem[PCG00]{PCG00}
D.~M. Post, M.~E. Conners, and D.~S. Goldberg, \emph{Prey preference by a top
  predator and the stability of linked food chains}, Ecology \textbf{81}
  (2000), no.~1, 8--14.

\bibitem[PDJ{\etalchar{+}}92]{P92}
L.~Persson, S.~Diehl, L.~Johansson, G.~Andersson, and S.~F. Hamrin,
  \emph{Trophic interactions in temperate lake ecosystems: a test of food chain
  theory}, The American Naturalist \textbf{140} (1992), no.~1, 59--84.

\bibitem[Pol79]{P79}
P.~Polansky, \emph{Invariant distributions for multi-population models in
  random environments}, Theoretical Population Biology \textbf{16} (1979),
  no.~1, 25--34.

\bibitem[Pol91]{P91}
G.~A. Polis, \emph{Complex trophic interactions in deserts: an empirical
  critique of food-web theory}, The American Naturalist \textbf{138} (1991),
  no.~1, 123--155.

\bibitem[Rud03]{R03}
R.~Rudnicki, \emph{Long-time behaviour of a stochastic prey-predator model},
  Stochastic Process. Appl. \textbf{108} (2003), no.~1, 93--107. \MR{2008602}

\bibitem[SBA11]{SBA11}
S.~J. Schreiber, M.~Bena{\"\i}m, and K.~A.~S. Atchad{\'e}, \emph{Persistence in
  fluctuating environments}, J. Math. Biol. \textbf{62} (2011), no.~5,
  655--683. \MR{2786721}

\bibitem[Sch12]{S12}
S.~J. Schreiber, \emph{Persistence for stochastic difference equations: a
  mini-review}, J. Difference Equ. Appl. \textbf{18} (2012), no.~8, 1381--1403.
  \MR{2956051}

\bibitem[SLS09]{SLS09}
S.~J. Schreiber and J.~O. Lloyd-Smith, \emph{Invasion dynamics in spatially
  heterogeneous environments}, The American Naturalist \textbf{174} (2009),
  no.~4, 490--505.

\bibitem[So79]{S79}
J.~W.~H. So, \emph{A note on the global stability and bifurcation phenomenon of
  a {L}otka-{V}olterra food chain}, Journal of Theoretical Biology \textbf{80}
  (1979), no.~2, 185--187.

\bibitem[THE{\etalchar{+}}10]{T10}
J.~Terborgh, R.~D. Holt, J.~A. Estes, J.~Terborgh, and J.~Estes, \emph{Trophic
  cascades: what they are, how they work, and why they matter}, Trophic
  Cascades: Predators, Prey and the Changing Dynamics of Nature (2010), 1--18.

\bibitem[Tur77]{T77}
M.~Turelli, \emph{Random environments and stochastic calculus}, Theoretical
  Population Biology \textbf{12} (1977), no.~2, 140--178.

\bibitem[VZSLR99]{VZ99}
M.~J. Vander~Zanden, B.~J. Shuter, N.~Lester, and J.~B. Rasmussen,
  \emph{Patterns of food chain length in lakes: a stable isotope study}, The
  American Naturalist \textbf{154} (1999), no.~4, 406--416.

\bibitem[WP93]{W93}
J.~T. Wootton and M.~E. Power, \emph{Productivity, consumers, and the structure
  of a river food chain.}, Proceedings of the National Academy of Sciences
  \textbf{90} (1993), no.~4, 1384--1387.

\end{thebibliography}

\appendix
\section{Proofs}\label{s:proofs}

The following result tells us that there is no ergodic invariant probability measure $\mu$ that has a gap in the chain of predators.

\begin{lm}\label{l:inv}
Suppose $\mu\in\M$ such that $I_\mu= \{n_1,\dots,n_k\}$. Then $I_\mu$ must be of the form $\{1,2,\dots,l\}$ for some $l\geq 1$.
\end{lm}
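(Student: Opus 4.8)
The plan is to show that $I_\mu$ is downward closed in the chain order: whenever $j\in I_\mu$ with $j\geq 2$, one necessarily has $j-1\in I_\mu$. Granting this implication, suppose $I_\mu$ is nonempty and set $l:=\max I_\mu=n_k$. Applying the implication repeatedly starting from $l$ forces $l-1,l-2,\dots,1\in I_\mu$, so that $\{1,\dots,l\}\subseteq I_\mu$; since $l$ is the maximum we also have $I_\mu\subseteq\{1,\dots,l\}$, and hence $I_\mu=\{1,\dots,l\}$ as claimed.

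The two inputs I would use are both recorded just before the lemma. First, because $\mu\in\M$ is supported on $\R_+^{\mu,\circ}$, every coordinate indexed outside $I_\mu$ vanishes $\mu$-almost surely, so $\int_{\R^n_+}x_i\,\mu(d\bx)=0$ for all $i\in I_\mu^c$. Second, by \eqref{e:lambda_0} the Lyapunov exponents along the support directions vanish, i.e. $\lambda_j(\mu)=0$ for every $j\in I_\mu$.

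The main step is a contradiction argument reading off signs from the definition \eqref{Lya.exp}. Fix $j\in I_\mu$ with $j\geq 2$ and suppose, for contradiction, that $j-1\notin I_\mu$, so that $\int_{\R^n_+}x_{j-1}\,\mu(d\bx)=0$. Substituting this into $\lambda_j(\mu)$ (the ``otherwise'' line of \eqref{Lya.exp} when $j<n$, or the apex line when $j=n$, in which case there is no $x_{j+1}$ term) makes the prey term $a_{j,j-1}\int x_{j-1}\,\mu(d\bx)$ drop out, leaving
\[
0=\lambda_j(\mu)=-\tilde a_{j0}-a_{jj}\int_{\R^n_+}x_j\,\mu(d\bx)-a_{j,j+1}\int_{\R^n_+}x_{j+1}\,\mu(d\bx),
\]
where the final term is omitted if $j=n$. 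Each term on the right is nonpositive: $\tilde a_{j0}=a_{j0}+\sigma_{jj}/2>0$ since $a_{j0}>0$ is a genuine predator death rate, while $a_{jj}>0$ and $a_{j,j+1}>0$ and the integrands are nonnegative. Hence the right-hand side is at most $-\tilde a_{j0}<0$, contradicting $\lambda_j(\mu)=0$. Therefore $j-1\in I_\mu$, which is exactly the downward-closure claim.

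I do not anticipate a real obstacle; the entire content lies in correctly tracking the signs, the point being that a predator's death term $-\tilde a_{j0}$ cannot be balanced to zero once its food source $x_{j-1}$ is absent. The only care needed is endpoint bookkeeping: for $j=n$ one invokes the apex-predator line of \eqref{Lya.exp}, which carries no $x_{j+1}$ contribution, and the base index $j=1$ requires nothing (there is no $x_0$), so the induction begins cleanly. The degenerate case $I_\mu=\emptyset$, corresponding to the Dirac mass $\bdelta^*$ at the origin, is excluded by the hypothesis that $I_\mu=\{n_1,\dots,n_k\}$ with $k\geq 1$, and so need not be addressed.
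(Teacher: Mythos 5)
Your proof is correct and follows essentially the same route as the paper: both arguments combine $\lambda_j(\mu)=0$ for $j\in I_\mu$ (equation \eqref{e:lambda_0}) with the vanishing of $\int x_{j-1}\,\mu(d\bx)$ when $j-1\notin I_\mu$ to force $\lambda_j(\mu)\leq -\tilde a_{j0}<0$, a contradiction. The only difference is organizational: the paper treats the cases ``$1\notin I_\mu$'' and ``an internal gap'' separately, whereas your single downward-closure claim subsumes both.
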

\begin{proof}
We argue by contradiction. First, suppose that $n_1>1$. By \eqref{e:lambda_0}
\begin{equation*}
\begin{split}
\lambda_{n_1}(\mu)= 0 &=-\tilde a_{n_1,0} + a_{n_1,n_1-1}\int_{ \R_+^n} x_{n_1-1}d\mu - a_{n_1,n_1}\int_{ \R_+^n} x_{n_1}d\mu\\
&= -\tilde a_{n_1,0} - a_{n_1,n_1}\int_{ \R_+^n} x_{n_1}d\mu \\
&<0
\end{split}
\end{equation*}
which is a contradiction.

Alternatively, suppose that there exists $\mu\in\M$ such that $I_\mu= \{1,\dots, u^*, v^*,\dots, n_k\}$ with $1\leq u^*<v^*-1\leq n_k\leq n$. As a result one can see that $v^*-1\notin I^\mu$. Then by \eqref{e:lambda_0}
\begin{equation*}
\begin{split}
\lambda_{v^*}(\mu)= 0 &=-\tilde a_{n_1,0} + a_{v^*,v^*-1}\int_{ \R_+^n} x_{v^*-1}d\mu - a_{v^*,v^*}\int_{ \R_+^n} x_{v^*}d\mu - a_{v^*,v^*+1}\int_{ \R_+^n} x_{v^*+1}d\mu\\
&= -\tilde a_{v^*,0} - a_{v^*,v^*}\int_{ \R_+^n} x_{v^*}d\mu - a_{v^*,v^*+1}\int_{ \R_+^n} x_{v^*+1}d\mu\ \\
&<0
\end{split}
\end{equation*}
which is a contradiction.
\end{proof}
For $i=1,\dots,n$, denote by $\M_i$ the set of all invariant probability measures $\mu$ of $\BX$
satisfying $\mu\left(\R^{(i),\circ}_+\right)=1$.
For $i=0$, define $\M_0=\{\bdelta^*\}$.
By Lemma \ref{l:inv},
we have $\Conv(\M)=\Conv(\cup_{i=0}^{n-1}\M_i)$
and
$\Conv(\cup_{i=0}^{n}\M_i)$ is the set of all invariant probability measures of $\BX$ on $\R^n_+$.
\begin{lm}\label{lm3.2}
We have the following claims.
\begin{itemize}
\item
If $\I_k\leq 0$ then $\I_{k+1}<0$.
\item If $\I_n\leq 0$,
there $\BX$ has no invariant probability measure on $\R^{n,\circ}_+$.
\end{itemize}
\end{lm}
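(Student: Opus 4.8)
The two claims are closely linked, and I would prove the first one, then deduce the second as a short consequence. For the first claim, the natural strategy is to use the explicit recursion for the solution of the linear system \eqref{ss}, exactly as packaged in Proposition \ref{p:inv_sol}. Recall that that proposition gives
\[
x_{k}^{(k)} = \frac{\I_{k}}{a_{k,k} + a_{k,k-1} c_{k-1}'},
\]
where $c_{k-1}' \geq 0$ by the computation at the end of the proof of Proposition \ref{p:inv_sol}. Since $a_{k,k}>0$ and $a_{k,k-1}>0$, the denominator is strictly positive, so the sign of $x_{k}^{(k)}$ is exactly the sign of $\I_{k}$. Therefore $\I_k \leq 0$ forces $x_{k}^{(k)} \leq 0$.

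The key observation is then the definition \eqref{e:inv_j+1}, which reads
\[
\I_{k+1} = -\tilde a_{k+1,0} + a_{k+1,k}\, x_{k}^{(k)}.
\]
Here $\tilde a_{k+1,0} = a_{k+1,0} + \tfrac{\sigma_{k+1,k+1}}{2} > 0$ is a strictly positive stochastic death rate, $a_{k+1,k}>0$, and we have just shown $x_{k}^{(k)} \leq 0$. Hence the second term is $\leq 0$ while $-\tilde a_{k+1,0} < 0$ strictly, so $\I_{k+1} < 0$. This establishes the first claim. The only subtlety to check carefully is that Proposition \ref{p:inv_sol} and its sign analysis apply even when $\I_k \leq 0$, i.e.\ when the system \eqref{ss} may fail to have a strictly positive solution; but the recursion \eqref{e:c'}--\eqref{e:d'} and the identity in Proposition \ref{p:inv_sol} are purely algebraic and hold regardless of the sign of the solution, so this causes no difficulty. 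In fact this is essentially the same argument already used in the proof of Proposition \ref{p:inv_sol_2}, so I would note the overlap.

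For the second claim, I would argue by contrapositive via the Lyapunov-exponent characterization. Suppose $\BX$ \emph{does} have an invariant probability measure $\mu$ on $\R_+^{n,\circ}$, so that $I_\mu = \{1,\dots,n\}$. By \eqref{e:lambda_0} we have $\lambda_i(\mu)=0$ for all $i$, and by \eqref{e.meanmu} the vector of means $(\E_\mu X_1,\dots,\E_\mu X_n)$ solves the system \eqref{e:system} with $j=n$, giving $\E_\mu X_i = x_i^{(n)}$. Since $\mu$ lives on the strictly positive orthant, each $\E_\mu X_i = x_i^{(n)} > 0$; in particular $x_n^{(n)}>0$, which by Proposition \ref{p:inv_sol} (applied with $j+1 = n$) forces $\I_n>0$, contradicting $\I_n \leq 0$. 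Hence no such $\mu$ exists.

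The main obstacle, such as it is, is bookkeeping rather than a genuine mathematical difficulty: one must be careful that the indexing in Proposition \ref{p:inv_sol} lines up (the proposition relates $\I_{j+1}$ to $x_{j+1}^{(j+1)}$, so applying it to conclude something about $\I_k$ and $x_k^{(k)}$ requires shifting $j+1 \mapsto k$), and that the nonnegativity $c_{k-1}'\geq 0$ together with the strict positivity of the competition, predation, and stochastic death coefficients is invoked correctly to get the \emph{strict} inequality $\I_{k+1}<0$ from the weak inequality $\I_k \leq 0$. No delicate estimates or probabilistic arguments are needed beyond citing \eqref{e:lambda_0} and \eqref{e.meanmu}.
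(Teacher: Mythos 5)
Your proposal is correct and follows essentially the same route as the paper: both claims rest on the sign identity $x_k^{(k)} = \I_k/(a_{kk}+a_{k,k-1}c_{k-1}')$ from Proposition \ref{p:inv_sol} together with the strict positivity of $\tilde a_{k+1,0}$ for the first claim, and on \eqref{e.meanmu} forcing $x_n^{(n)}>0$ for the second. The only cosmetic difference is that you argue the first claim directly ($\I_k\leq 0 \Rightarrow x_k^{(k)}\leq 0 \Rightarrow \I_{k+1}<0$) while the paper states the contrapositive ($\I_{k+1}\geq 0 \Rightarrow x_k^{(k)}>0 \Rightarrow \I_k>0$).
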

\begin{proof}
If
$\I_{k+1}=
-\tilde a_{k+1,0} + a_{k+1,j} x^{(k)}_k\geq 0$,
then $x^{(k)}_k>0$.
We will show in Section 4 that $x^{(k)}_k$
 has the same sign as $\I_k$.
Thus, if $\I_{k+1}\geq 0$ then $\I_k>0$, which proves the first claim.

If $\BX$ has an invariant probability measure $\mu$ on $\R^{n,\circ}_+$,
then we must have
$\int_{\R^n_+}x_n\mu(d\bx)=x^{(n)}_n$.
As a result $x^{(n)}_n>0$,
which leads to $\I_n>0$ since they have the same sign.
The second claim is therefore proved.
\end{proof}
\begin{lm}\label{lm3.3}
We have the following claims.
\begin{enumerate}
\item For any initial condition $\BX(0)=\bx\in\R^n_+$,
the family $\left\{\wtd \Pi_t(\cdot), t\geq 1\right\}$ is tight in $\R^n_+$,
and its weak$^*$-limit set, denoted by $\U=\U(\omega)$
is a family of invariant probability measures of $\BX$ with probability 1.
\item Suppose that there is a sequence $(T_k)_{k\in\N}$ such that $\lim_{k\to\infty}T_k=\infty$ and
$\left(\wtd \Pi_{T_k}(\cdot)\right)_{k\in\N}$ converges weakly to an invariant probability measure $\pi$ of $\BX$
when $k\to\infty$ .
Then for this sample path, we have
$\int_{\R^n_+}h(\bx)\wtd\Pi_{T_k}(d\bx)\to \int_{\R^n_+}h(\bx)\pi(d\bx)$
for any continuous function $h:\R^n_+\to\R$ satisfying
$|h(\bx)|<K_h(1+\|\bx\|)\,,\,\bx\in \R^n_+$,
with $K_h$ a positive constant and $\delta\in[0,\delta_1)$.
\item
For any $\bx\in\R^{n,\circ}_+$
\begin{equation}\label{e.rem}
 \PP_\bx\left\{\lim_{t\to\infty}\left(\dfrac{\ln X_i(t)}t-\lambda_i\left(\tilde\Pi_t\right)\right)=0,\, i=1,\dots,n\right\} =1
\end{equation}
and
\begin{equation}
\label{e4-thm1.1}
 \PP_\bx\left\{\limsup_{t\to\infty} \dfrac{\ln X_i(t)}t\leq 0, i=1,\dots,\right\}n=1.
\end{equation}

\end{enumerate}
\end{lm}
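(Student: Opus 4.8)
The plan is to base everything on the Lyapunov function $V(\bx):=1+\sum_i c_ix_i$ built from the weights $\bc$ of \eqref{e.c}, together with the dissipativity estimate \eqref{a.tight}; it is through these that parts (2)--(3) of Assumption~1.1 of \cite{HN16} are verified, so all the pathwise and moment machinery of that paper is available. For part (1) I would first prove tightness by applying It\^o's formula to $V$. Since the weights satisfy $c_ja_{j,j-1}=c_{j-1}a_{j-1,j}$, the predator--prey cross terms cancel and (the affine $V$ killing the diffusion part) the drift collapses to
\[
\op V(\bx)=\sum_i c_ix_if_i(\bx)=c_1a_{10}x_1-\sum_{j\ge2}c_ja_{j0}x_j-\sum_i c_ia_{ii}x_i^2\le C_1-C_2\sum_i x_i^2
\]
for constants $C_1,C_2>0$, where $\op$ is the generator of \eqref{e:system_2}. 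Integrating and controlling the resulting martingale by its quadratic variation shows $\frac1t\int_0^t\sum_iX_i(s)^2\,ds$ is a.s.\ bounded, whence $\{\wtd\Pi_t,t\ge1\}$ is a.s.\ tight. To identify the limit set $\U$ I would use the Echeverria--Weiss characterization: for $g\in C_c^2$, It\^o gives $\frac1t\big(g(\BX(t))-g(\BX(0))\big)=\int\op g\,d\wtd\Pi_t+t^{-1}(\text{mart.})$, and passing $t\to\infty$ along a subsequence with $\wtd\Pi_{t_k}\Rightarrow\pi$ forces $\int\op g\,d\pi=0$ for every such $g$, i.e.\ $\pi$ is invariant.

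For part (2) the aim is to upgrade weak convergence to convergence against test functions of linear growth, which needs uniform integrability. The estimate \eqref{a.tight} is precisely the hypothesis that, through \cite{HN16}, furnishes a uniform moment bound $\sup_k\int\|\bx\|^{1+\delta}\,d\wtd\Pi_{T_k}<\infty$ for some $\delta\in(0,\delta_1)$. Given this, for $|h(\bx)|\le K_h(1+\|\bx\|)$ I would split $h$ with a continuous cutoff at radius $R$: the bounded piece converges by the assumed weak convergence $\wtd\Pi_{T_k}\Rightarrow\pi$, while the tail $\int_{\{\|\bx\|>R\}}|h|\,d\wtd\Pi_{T_k}$ is controlled uniformly in $k$ by the $(1+\delta)$-moment bound and made arbitrarily small by taking $R$ large.

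For part (3), the first identity \eqref{e.rem} is essentially bookkeeping: applying It\^o to $\ln X_i$ and recalling from \eqref{Lya.exp} that $\lambda_i(\wtd\Pi_t)=\int(f_i-\tfrac{\sigma_{ii}}2)\,d\wtd\Pi_t=\frac1t\int_0^t\big(f_i(\BX(s))-\tfrac{\sigma_{ii}}2\big)\,ds$, one obtains the exact identity
\[
\frac{\ln X_i(t)}{t}-\lambda_i(\wtd\Pi_t)=\frac{\ln X_i(0)}{t}+\frac{E_i(t)}{t},
\]
whose right-hand side tends to $0$ a.s.\ because $E_i$ is a Brownian martingale with $\langle E_i\rangle_t=\sigma_{ii}t$. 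For \eqref{e4-thm1.1} I would set $W:=\ln V$; It\^o plus the strong law (the martingale's quadratic variation grows at most linearly) give $\frac{W(\BX(t))}{t}=\int\op W\,d\wtd\Pi_t+o(1)$ a.s., and $\op W(\bx)=\frac{\sum c_ix_if_i}{V}-\frac12\frac{\sum\sigma_{ij}c_ic_jx_ix_j}{V^2}\to-\infty$ as $\|\bx\|\to\infty$, so $\op W$ is continuous and bounded above.

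The main obstacle is that this crude boundedness only yields $\limsup_t W(\BX(t))/t\le\text{const}$, not $\le0$. To sharpen it I would combine parts (1)--(2): along any sequence realising $\limsup_t\int\op W\,d\wtd\Pi_t$, tightness extracts $\wtd\Pi_{t_k}\Rightarrow\pi$ with $\pi$ invariant, and since $\op W$ is upper semicontinuous and bounded above, the portmanteau theorem gives $\limsup_k\int\op W\,d\wtd\Pi_{t_k}\le\int\op W\,d\pi$. The $(1+\delta)$-moment bound from part (2) makes $\op W\in L^1(\pi)$ (as $\op W$ decays like $-\|\bx\|$), so the standard fact $\int\op W\,d\pi=0$ for invariant $\pi$ applies, giving $\limsup_t\int\op W\,d\wtd\Pi_t\le0$ and hence $\limsup_t W(\BX(t))/t\le0$; finally $c_iX_i\le V$ yields $\ln X_i\le W-\ln c_i$, so $\limsup_t\frac{\ln X_i(t)}t\le0$. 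The two delicate points to nail down are the passage from in-expectation to a.s.\ tightness in part (1) and the vanishing of $\int\op W\,d\pi$ despite $\op W$ being unbounded below, both of which I would settle using the moment estimates guaranteed by \eqref{a.tight} and \cite{HN16}.
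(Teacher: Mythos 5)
Your proposal is correct, and it reaches the same conclusions as the paper, but it takes a more self-contained route: the paper's own proof of parts (1) and (2) consists of verifying Assumption 1.4 of \cite{HN16} (via the modified weights $\tilde c_i$ and the bound $\sum_i\tilde c_if_i(\bx)\leq\tilde C-\tilde\gamma\sum_ix_i$) and then citing \cite[Lemmas 4.6 and 4.7]{HN16}, while you reconstruct those lemmas directly — tightness from the affine Lyapunov function $V=1+\bc^\top\bx$ built on the telescoping weights of \eqref{e.c}, identification of limit points as invariant measures via the Echeverr\'ia--Weiss characterization, and uniform integrability for linear-growth test functions from the a.s.\ bound on $\frac1t\int_0^t\sum_iX_i^2(s)\,ds$ (which in fact gives you a second-moment bound, stronger than the $(1+\delta)$-moment you invoke). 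Your derivation of \eqref{e.rem} is verbatim the paper's: the exact It\^o identity plus the strong law for martingales. For \eqref{e4-thm1.1} the paper simply points to equation (4.22) of \cite{HN16} or to \cite{DS06}, whereas you give an explicit argument via $W=\ln V$; this works, and the one step you rightly flag as delicate — that $\int\op W\,d\pi\leq 0$ for invariant $\pi$ despite $W$ and $\op W$ being unbounded — closes cleanly: on each ergodic component Birkhoff gives $\frac1t\int_0^t\op W(\BX(s))\,ds\to\int\op W\,d\pi\in[-\infty,\infty)$ (the integral exists since $\op W$ is bounded above), and since $W\geq0$ forces $\liminf_t W(\BX(t))/t\geq0$ in the stationary regime, the limit cannot be negative, so it is $0$ (or one argues directly that $-\infty\leq0$ suffices for the inequality you need). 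The trade-off is the usual one: the paper's proof is short but opaque without \cite{HN16} in hand, while yours is longer but exhibits exactly which structural features of the food chain (cancellation of the predation cross terms under the weights $\bc$, strict intraspecific competition giving the $-\sum_ic_ia_{ii}x_i^2$ dissipation) make the abstract hypotheses hold.
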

\begin{proof}
Let $\tilde c_1=1, \tilde c_i:=\prod_{j=2}^i\dfrac{a_{k-1,k}}{2a_{k,k-1}}=c_{i-1}\dfrac{a_{i-1,i-1}}{2a_{i,i-1}}, i\geq 2.$
Put $$\tilde\gamma=\min_{i=1,\dots,n}\left\{{c_i}\frac{a_{ii}}2\right\}$$
we can easily verify that
$$\sum_{i=1}^n\tilde c_i f_i(\bx)\leq \tilde C -\tilde\gamma\sum_{i=1}^n x_i\,\text{ for some positive constant }\, \tilde C.$$
Thus,
when $\|x\|$ is sufficiently large,
$|\sum_{i=1}^n\tilde c_i f_i(\bx)|\geq \tilde\gamma\sum_{i=1}^n x_i$,
which implies
$$
\liminf_{\|x\|\to\infty} \dfrac{\sum_{i=1}^n\tilde c_i |f_i(\bx)|}{\sum_{i=1}^n x_i}
\geq \liminf_{\|x\|\to\infty} \dfrac{\left|\sum_{i=1}^n\tilde c_i f_i(\bx)\right|}{\sum_{i=1}^n x_i}\geq \tilde\gamma.
$$
As a result,
$$\liminf_{\|x\|\to\infty} \dfrac{\|\bx\|^\delta}{\sum_{i=1}^n |f_i(\bx)|}=0\,\text{ for any }\,\delta\in(0,1).
$$
In other words,
Assumption 1.4 of \cite{HN16} is satisfied by our model.
Thus, the first and second claims of this lemma
follow from \cite[Lemma 4.6, Lemma 4.7]{HN16}.
By It\^o's formula
and the definition of $\tilde\Pi_t$, we have
$$
\left(\dfrac{\ln X_i(t)}t-\lambda_i\left(\tilde\Pi_t\right)\right)=
\dfrac{\ln X_i(0)}t+\frac{E_i(t)}{ t}.
$$
By the strong law of large numbers for martingales,
$$\lim_{t\to\infty}\dfrac{\ln X_i(0)}t+\frac{E_i(t)}{ t}=0 \,\text{ a.s.}$$
which leads to \eqref{e.rem}.

\eqref{e4-thm1.1}
can be derived by using equation (4.22) of \cite{HN16} or by mimicking the proof of \cite[Theorem 2.4]{DS06}.
\end{proof}
\begin{proof} [Proof of Theorem \ref{t:main} (i)]
Since $\I_n>0$,
it follows from Lemma \ref{lm3.2} that $\I_k>0$
for any $k=1,\dots,n$.
By Lemma \ref{l:inv},
for any $\mu\in\Conv(\M)=\Conv(\cup_{i=0}^{n-1}\M_i)$,
we can decompose $\mu=\rho_1\mu_{i_1}+\dots+\rho_k\mu_{i_k}$
where $0\leq i_1<\dots<i_k\leq n-1$
and
$\mu_{i_j}\in\M_{i_j}$, $\rho_j> 0$ for $j=1,\dots,k$ and $\sum\rho_j=1$.
Since
$i_1< i_j$ for $j=2,\dots,k$,
we deduce from \eqref{e:lambda_0}
that $\lambda_{i_1+1}(\mu_{i_j})=0$ for $j=2,\dots,k$.
On the other hand,
\eqref{e:inv_j+1} and \eqref{e.meanmu} imply
$$\lambda_{i_1+1}(\mu_{i_1})= -\tilde a_{i_1+1,0} + a_{i_1+1,i_1} x^{(i_1)}_{i_1}=\I_{i_1+1}>0.$$
As a result,
$$\lambda_{i_1+1}(\mu)=\rho_1\lambda_{i_1+1}(\mu_{i_1})>0.$$
Thus,
\begin{equation}\label{e6-thm1.1}
\max_{i=1,\dots,n}\lambda_i(\mu)>0,\text{ for any }\,\mu\in\Conv(\M).
\end{equation}
In other words,
Assumption \ref{a.coexn} is satisfied.
By Theorem 3.1 of \cite{HN16},
there exist positive $p_1,\dots, p_n, T$ and constants $\theta, \kappa\in (0,1)$
such that
\begin{equation}\label{e5-thm1.1}
\E_{\bx} V^\theta(X( T))\leq \kappa V^\theta(x)+ K
\end{equation}
where
$$
V(\bx):=\dfrac{1+\bc^\top\bx}{\Pi_{i=1}^nx_i^{p_i}}\,\text{ for }\, \bx\in\R^{n,\circ}_+,\,\text{ with } \bc\,\text{ defined in }\, \eqref{e.c},\,\text{ and }\, \sum_{i=1}^np_i<1.
$$
Equation \eqref{e5-thm1.1} and the Markov property of $\BX$ lead to
$$\E_{\bx} V^\theta(X(m T))\leq \kappa^m V^\theta(x)+K\sum_{j=1}^{m-1}\kappa^j.$$
Thus,
\begin{equation}\label{e1-thm1.1}
\limsup_{m\to\infty}\E_{\bx} V^\theta(\BX(m T))\leq \dfrac{ K}{1-\kappa},\,\bx\in\R^{n,\circ}_+.
\end{equation}
By \cite[Lemma 2.1]{HN16},
there exists $\hat K>0$ such that
$$\E_{\bx} V^\theta(\BX(t))\leq \exp(\hat Kt)V^\theta(\bx),\,\bx\in\R^{n,\circ}_+,$$
which together with the Markov property implies
\begin{equation}
\label{e2-thm1.1}
\E_{\bx} V^\theta(\BX(t))\leq \exp(\hat K T)\E_\bx V^\theta(\BX(m T))
\text{ for } t\in[m T,(m+1) T].
\end{equation}
In view of \eqref{e1-thm1.1}
and \eqref{e2-thm1.1},
we have
$$
\limsup_{t\to\infty}\E_{\bx} V^\theta(\BX(t))\leq \exp(\hat K T)\dfrac{ K}{1-\kappa}.$$

For any fixed $\eps>0$,
define $K:=\left\{\bx\in\R^{n,\circ}_+: V^\theta(\bx)\leq\dfrac1\eps\exp(\hat K T)\dfrac{ K}{1-\kappa}\right\}$
then $K$ is a compact subset of $\R^{n,\circ}_+$.
The definition of $K$ together with the last inequality yield
\begin{equation}
\label{e3-thm1.1}
\limsup_{t\to\infty}\PP_{\bx} \{\BX(t)\notin K\}\leq
\left(\eps\exp(-\hat K T)\dfrac{1-\kappa}{ K}\right)\limsup_{t\to\infty}\E_{\bx} V^\theta(\BX(t))\leq \eps.
\end{equation}
The stochastic persistence in probability is therefore proved.

To prove
\eqref{e0a-thm1.1},
we need to show that
for any initial value $\bx\in\R^{n,\circ}_+$,
the weak-limits points of
$\tilde\Pi_{t}$ are a subset of $\M_n$
with probability 1.

Suppose the claim is false.
Then, by part (i) of Lemma \ref{lm3.3}, we can find $\bx\in\R^{n,\circ}_+$ and $\tilde\Omega_\bx\subset \Omega$ with $\PP_\bx(\tilde\Omega_\bx)>0$ and
such that for $\omega\in \tilde\Omega_\bx$,
there exists $t_k=t_k(\omega)$
satisfying that $\lim_{k\to\infty}t_k=\infty$
and $\tilde\Pi_{t_k}(\omega)$ converges weakly to $\mu(\omega)=\rho_1\mu_1+\rho_2\mu_2$
where $\mu_1\in\Conv(\M)$ and $\mu_2\in\M_n$ and $\rho_1>0$.
By Lemma \ref{lm3.2}, $\lambda_n(\mu_1)>0$.
In view of \eqref{e:lambda_0}, $\lambda_n(\mu_2)=0$.
Thus, for almost all $\omega\in\tilde\Omega_\bx$, we have from part (ii) of Lemma \ref{lm3.3} that
$$\lim_{k\to\infty}\dfrac{\ln X_n(t_k)}{t_k}
=\lim_{k\to\infty}\lambda_n\left(\tilde\Pi_{t_k}\right)=\lambda_n(\mu)>0,$$
which contradicts \eqref{e4-thm1.1}.
Thus, with probability 1,
the weak-limit points of $\tilde\Pi_{t}$ as $t\to\infty$
must be contained in $\M_n$.
Then, \eqref{e0a-thm1.1} follows from \eqref{e.meanmu}.

When $\Sigma$ is positive definite,
it follows from \cite[Theorem 3.1]{HN16} that
the food chain $\BX$
is strongly stochastically persistent and its transition probability converges to its unique invariant probability measure $\pi^{(n)}$ on $\R_+^{n,\circ}$ exponentially fast in total variation.

\end{proof}

\begin{proof} [Proof of Theorem \ref{t:main} (ii)]

We suppose there exists $j^*<n$ such that $\I_{j^*}>0$ and $\I_{j^*+1}<0$. By Lemma \ref{lm3.2} part (ii),
there are no invariant probability measures on $\R^{(j),\circ}_+$
for $j=j^*+1,\dots,n$.
Using Lemma \ref{l:inv},
we see that the set of invariant probability measures on $\R^n_+$
of $\BX$ is $\Conv(\cup_{i=0}^{j^*}\M_i)$.

Note that $\lambda_{j^*+1}(\mu)=-\tilde a_{j^*+1}<0$ if $\mu\in \M_i$ for $i<j^*$
and $\lambda_{j^*+1}(\mu)=\I_{j^*+1}<0$ if $\mu\in\M_{j^*}$.
As a result,
$\lambda_{j^*+1}(\mu)<0$ for any $\mu\in\Conv(\cup_{i=0}^{j^*}\M_i)$.
Similarly,
$\lambda_{j}(\mu)<0$ for any $j>j^*+1$ and $\mu\in\Conv(\cup_{i=0}^{j^*}\M_i)$.
By \eqref{e.rem}
we have that
$$\lim_{t\to\infty}X_j(t)=0, j=j^*+1,\dots,n \,\,\PP_\bx-\text{a.s.}$$

Since
\begin{equation}
\label{e7-thm1.1}
\int_{\R^n_+}x_i'\mu(d\bx')=
\begin{cases}
x^{(j^*)}_i\,&\text{ if } i=1,\dots,j^*,\\
0\,&\text{ if } i=j^*+1,\dots, n.
\end{cases}
\,\text{ for }\, \mu\in\M_{j^*},
\end{equation}
we have
\begin{equation}
\label{e8-thm1.1}
\lambda_{i}(\mu)=
\begin{cases}
\I_{j^*+1}\,&\text{ if } i=j^*+1\\
-\tilde a_{i0}
\,&\text{ if } i>j^*+1.
\end{cases}
\,\text{ for }\, \mu\in\M_{j^*}.
\end{equation}
Using \eqref{e4-thm1.1}
and a contradiction argument similar to that in the proof of part (i),
we can show that
with probability 1,
the weak-limit points of $\tilde\Pi_{t}$ as $t\to\infty$
must be contained in $\M_{j^*}$.
Thus, for $\bx\in\R^{n,\circ}_+$,
we have from \eqref{e7-thm1.1}, \eqref{e8-thm1.1},
and
Lemma \ref{lm3.2} that $$
\lim_{t\to\infty}\dfrac1t\int_0^t X_i(s)ds=
\begin{cases}
x^{(j^*)}_i\,&\text{ if } i=1,\dots,j^*,\\
0\,&\text{ if } i=j^*+1,\dots, n
\end{cases}\,\,\PP_\bx-\text{a.s}.
$$
and
\begin{equation*}
\lim_{t\to\infty}\dfrac{\ln X_i(t)}t=
\begin{cases}
\I_{j^*+1}\,&\text{ if } i=j^*+1\\
-\tilde a_{i0}
\,&\text{ if } i>j^*+1.
\end{cases}
\,\,\PP_\bx-\text{a.s}.
\end{equation*}

To prove the persistence in probability of $(X_1,\dots,X_{j^*})$,
we define

$$\R^{(j^*),\diamond}=\Big\{\bx=(x_1,\dots,x_n)\in\R^n_+: x_j>0 \,\text{ for }\, j=1,\dots,j^*\Big\},\,\,\text{ and }\, \partial\R^{(j^*),\diamond}=\R^n_+\setminus \R^{(j^*),\diamond}.$$
We have proved that $\Conv\left(\bigcup_{j=0}^{j^*}\M_j\right)$
is the set of invariant probability measures of $\BX$ on $\R^n_+$.
Note that $\Conv\left(\bigcup_{j=0}^{j^*-1}\M_j\right)$
is the set of invariant probability measures of $\BX$ on $\partial\R^{(j^*),\diamond}$.
Since $\I_{j^*}>0$,
applying \eqref{e6-thm1.1} with $n$ replaced by $j^*$ we obtain
\begin{equation}\label{e9-thm1.1}
\max_{i=1,\dots,j^*}\lambda_i(\mu)>0,\text{ for any }\,\mu\in\Conv\left(\cup_{j=0}^{j^*-1}\M_j\right).
\end{equation}
Using this condition,
we can imitate the proofs in \cite[Section 3]{HN16} to construct a Lyapunov function
$U(\bx):\R^{(j^*),\diamond}_+\mapsto\R_+$ of the form
$$U(\bx)=\dfrac{1+\bc^\top\bx}{\Pi_{i=1}^{j^*}x_i^{\tilde p_i}}, \tilde p_i>0, i=1,\dots,j^*$$
satisfying
\begin{equation}\label{e10-thm1.1}
\E_{\bx} U^{\tilde\theta}(X( T))\leq \tilde \kappa U^{\tilde\theta}(x)+\tilde K, \text{ for }\, \bx\in\R^{(j^*),\diamond}_+
\end{equation}
and
\begin{equation}\label{e11-thm1.1}
\E_{\bx} U^{\tilde\theta}(X( t))\leq \exp(\bar K t) U^{\tilde\theta}(x)\,\text{ for }\, \bx\in\R^{(j^*),\diamond}_+ ,
\end{equation}
where
$\tilde p_i>0$ for $i=1,\dots,j^*$,  $\sum_{i=1}^{j^*}\tilde p_i<1$,
$\tilde\theta, \tilde\kappa$ are some constants in $(0,1)$,
and $\tilde T, \tilde K, \bar K$ are positive constants.
Using \eqref{e10-thm1.1} and \eqref{e11-thm1.1},
we can obtain the persistence in probability of $(X_1,\dots,X_{j^*})$
in the same manner as \eqref{e3-thm1.1}.
The proof is complete.
\end{proof}

\begin{proof} [Proof of Theorem \ref{t:main} (iii)]
Let $f:\R^n_+\mapsto\R$ be a continuous function
and $\sup_{\bx\in\R^n_+}|f(\bx)|\leq 1$.
Fix $\bx_0\in\R^{n,\circ}_+$.
We have to show that
\begin{equation}\label{e19-thm1.1}
\lim_{t\to\infty}\left|\int_{\R^n_+} f(\bx')\pi_{j^*}(d\bx')-\int_{\R^n_+}f(\bx')P(t, \bx_0, d\bx')\right|=0.
\end{equation}
In part (ii),
we have proved that
$(X_1,\dots, X_{j^*})$ is persistent in probability.
Thus, for any $\eps>0$, there exist $T_1>0$ and $H>1$ such that
\begin{equation}\label{e13-thm1.1}
\PP_{\bx_0}\left\{H^{-1}\leq X_j(t)\leq H, j=1,\dots, j^*\right\}>1-\eps\,\text{ for any }\,t\geq T_1.\end{equation}
For $\delta\geq0$ define
$$K_\delta=\{\bx=(x_1,\dots,x_n)\in\R^n_+: H^{-1}\leq x_j\leq H,\text{ for } j=1,\dots, j^*, x_j\leq \delta,\text{ for } j=j^*+1,\dots,n\}.$$
Let $\bar f=\int_{\R^n_+} f(\bx')\pi_{j^*}(d\bx')$.
In view of \eqref{uwc},
there exists $T_2>0$ such that
\begin{equation}\label{e14-thm1.1}
\left|\int_{\R^n_+}f(\bx')P(T_2, \bx, d\bx')-\bar f\right|<\eps
\text{ for any } \bx\in K_0
\end{equation}
Since $\BX$ is a Markov-Feller process on $\R^n_+$,
we can find a sufficiently small $\delta=\delta(\eps)>0$ such that
\begin{equation}\label{e15-thm1.1}
\left|\int_{\R^n_+}f(\bx')P(T_2, \bx_1, d\bx')-\int_{\R^n_+}f(\bx')P(T_2, \bx_2, d\bx')\right|<\eps\,\text{
given that }\,\|\bx_1-\bx_2\|\leq \delta.
\end{equation}
Thus, \eqref{e14-thm1.1} and \eqref{e15-thm1.1} imply
\begin{equation}\label{e16-thm1.1}
\left|\int_{\R^n_+}f(\bx')P(T_2, \bx, d\bx')-\bar f\right|<2\eps
\text{ for any } \bx\in K_\delta.
\end{equation}
Since $X_{j^*+1},\dots, X_n$ converges to $0$ almost surely,
there exists $T_3>T_1$ such that
\begin{equation}\label{e17-thm1.1}
\PP_{\bx_0}\left\{X_j(t)\leq \delta, j=j^*+1,\dots,n\right\}>1-\eps\,\text{ for any }\,t\geq T_3.\end{equation}
We deduce from \eqref{e13-thm1.1} and \eqref{e17-thm1.1}
that
\begin{equation}\label{e18-thm1.1}
P(t, \bx_0,K_\delta)=\PP_{\bx_0}\left\{\BX_j(t)\in K_\delta\right\}>1-2\eps\,\text{ for any }\,t\geq T_3.
\end{equation}
 For any $t\geq T_3+T_2$, we have from the Chapman-Kolmogorov equation, \eqref{e16-thm1.1}, \eqref{e18-thm1.1} and $|f(\bx)|\leq 1$ that
 $$
 \begin{aligned}
\left|\int_{\R^n_+}f(\bx')P(t, \bx_0, d\bx')-\bar f\right|=&
\left|\int_{\R^n_+}\left(\int_{\R^n_+}f(\bx')P(T_2, \bx, d\bx')-\bar f\right)P(t-T_2,\bx_0,d\bx)\right|\\
\leq&\left|\int_{K_\delta}\left(\int_{\R^n_+}f(\bx')P(T_2, \bx, d\bx')-\bar f\right)P(t-T_2,\bx_0,d\bx)\right|\\
&+\left|\int_{\R^n_+\setminus K_\delta}\left(\int_{\R^n_+}f(\bx')P(T_2, \bx, d\bx')-\bar f\right)P(t-T_2,\bx_0,d\bx)\right|\\
\leq&2\eps(1-\eps)+2(2\eps)\leq 6\eps,
\end{aligned}
$$
which leads to \eqref{e19-thm1.1}.
The proof is complete.
\end{proof}
\begin{proof} [Proof of Theorem \ref{t:main} (iv)]
If $\Sigma_{j^*}$
is positive definite,
then by Theorem \ref{t:main} part (i) for $\bx\in\R^{(j^*),\circ}_+$ one has that as $t\to\infty$ the transition probability
$P(t, x, \cdot)$ converges in total variation to a unique invariant probability measure $\pi_{j^*}$.
Moreover, the convergence is uniform in each compact set of $\R^{(j^*),\circ}_+$
(due to the property of the Lyapunov function constructed in the proof).
As a result \eqref{uwc} is satisfied and the conclusion follows by part (iii) of Theorem \ref{t:main}.
\end{proof}

\end{document}